\numberwithin{equation}{section}
\def\mA{\mathcal{A}}
\def\mE{\mathcal{E}}
\def\mF{\mathcal{F}}
\def\mH{\mathcal{H}}
\def\mL{\mathcal{L}}
\def\mM{\mathcal{M}}
\def\mN{\mathcal{N}}
\DeclareMathOperator{\supp}{Supp}
\newcommand{\dd}{\mathrm{d}}
\newtheorem{thm}{Theorem}[section]
\newtheorem{lemma}[thm]{Lemma}
\newtheorem{prop}[thm]{Proposition}
\theoremstyle{definition}
\theoremstyle{definition}
\theoremstyle{definition}
\newtheorem{defn}[thm]{Definition}
\newcommand{\be}{\begin{eqnarray}}
\newcommand{\ee}{\end{eqnarray}}
\newcommand{\comment}[1]{}
\colorlet{RED}{red}
\tikzstyle{block} = [rectangle, draw, fill=cyan!20, text centered, rounded corners, minimum height=2em]
\tikzstyle{line} = [draw, -latex']
\begin{document}

\title[Nonnegative scalar curvature and area decreasing maps]{Nonnegative scalar curvature and area decreasing maps on complete foliated manifolds}
 
\author{Guangxiang Su, Xiangsheng Wang and Weiping Zhang}
\address{Chern Institute of Mathematics \& LPMC, Nankai
University, Tianjin 300071, P.R. China}
\email{guangxiangsu@nankai.edu.cn}
\address{School of Mathematics, Shandong University, Jinan, Shandong 250100, P.R. China}
\email{xiangsheng@sdu.edu.cn}
\address{Chern Institute of Mathematics \& LPMC, Nankai
University, Tianjin 300071, P.R. China}
\email{weiping@nankai.edu.cn}

\begin{abstract}  
  Let $(M,g^{TM})$ be a noncompact complete Riemannian manifold of dimension $n$, {and} let $F\subseteq TM$ be an integrable subbundle of $TM$. Let $g^F=g^{TM}|_{F}$ be the restricted metric on $F$ and let $k^F$ be the associated leafwise scalar curvature. Let $f:M\to S^n(1)$ be a smooth area decreasing map along $F$, which is locally constant near infinity and of non-zero degree. We show that if $k^F> {\rm rk}(F)({\rm rk}(F)-1)$ on the support of ${\rm d}f$, and either $TM$ or $F$ is spin, then $\inf (k^F)<0$. As a consequence, we prove Gromov's sharp foliated $\otimes_\varepsilon$-twisting conjecture. {Using the same method,} we also extend {two famous non-existence results due to Gromov and Lawson about $\Lambda^2$-enlargeable metrics (and/or manifolds) to the foliated case.}

\end{abstract}

\maketitle

\section{Introduction} \label{s0}

In this paper, we always assume that $M$ is a smooth connected oriented manifold without boundary.
{We call a pair $(M,F)$ a foliated manifold if $F$ is a foliation of $M$, or equivalently, an integrable subbundle of $TM$.}

\subsection{An extension of Llarull's theorem}
 It is well known that starting with the famous Lichnerowicz vanishing theorem \cite{L63}, Dirac operators have played important roles in the study of Riemannian metrics of positive scalar curvature on spin manifolds (cf. \cite{GL83}, \cite{LaMi89}). A notable example is Llarull's rigidity theorem \cite{LL} which states that for a compact spin Riemannian manifold $(M,g^{TM})$ of dimension $n$ such that the associated scalar curvature 
$k^{TM}$ verifies that $k^{TM}\geq n(n-1)$, then any (non-strictly) area decreasing smooth map $f:M\to S^n(1)$ of non-zero degree is an isometry.

In answering a question of Gromov in an earlier version of \cite{Gr}, Zhang in \cite{Z19-2} proves that for an even dimensional noncompact complete spin Riemannian manifold $(M,g^{TM})$ and a smooth (non-strictly) area decreasing map $f:M\to S^{\dim M}(1)$ which is locally constant near infinity and of non-zero degree, if the associated scalar curvature $k^{TM}$ verifies
\begin{equation*}
  k^{TM}\geq (\dim M) (\dim M-1)\text{ on }{\rm Supp}({\rm d}f),
\end{equation*}
then $\inf (k^{TM})<0$.

The main idea in \cite{Z19-2}, which goes back to \cite[(1.11)]{Z19}, is to deform the involved twisted Dirac operator on $M$ by a suitable endomorphism of the twisted vector bundle. {Since the deformed Dirac operator is invertible near infinity, one can apply} the relative index theorem to {obtain a contradiction}.

In this paper, we generalize \cite{Z19-2} to the {foliated case}. 
For this purpose, inspired by~\cite[Definition 6.1]{GL83} and~\cite[Definition~0.1]{Z19}, we define the following class of maps.
\begin{defn}
  Let $(X,\mathcal{L})$ be a foliated manifold.
  A $C^1$-map $\varphi:X\to Y$ between Riemannian manifolds is said to be $(\epsilon,\Lambda^2)$-contracting along $\mathcal{L}$, if for all $x\in X$, the map $\varphi_*:\Lambda^2 (T_x X)\to \Lambda^2 (T_{\varphi(x)}Y)$ satisfies
  \begin{equation*}
    |\varphi_*(V_x\wedge W_x)| \le \epsilon |V_x\wedge W_x|,
  \end{equation*}
  for any $V_x, W_x\in \mathcal{L}_x\subseteq T_xX$.
\end{defn}

If $\mathcal{L} = TX$, the above definition coincides with the usual definition of the $(\epsilon,\Lambda^2)$-contracting map in~\cite[Definition 6.1]{GL83}.
Moreover, similar to the definition of the area decreasing map, if a map $\varphi$ is $(1,\Lambda^2)$-contracting along $\mathcal{L}$, we call $\varphi$ area decreasing along $\mathcal{L}$.

{Let $(M,F)$ be a noncompact foliated manifold of dimension $n$.}
Let {$g^{TM}$ be a complete Riemannian metric of $M$,} let $g^F=g^{TM}|_{F}$ be the restricted metric on $F$ and let $k^F$ be the associated leafwise scalar curvature. Let $f:M\to S^n(1)$ be a smooth map, which is area decreasing along $F$, and is locally constant near infinity and of non-zero degree. Let ${\rm d}f:TM\to TS^n (1)$ be the differential of $f$. The support of ${\rm d}f$ is defined to be ${\rm Supp}({\rm d}f)=\overline{\{x\in M:{\rm d}f(x)\neq 0\}}$.

The main result of this paper can be stated as follows.

\begin{thm}\label{t1.1}
Under the above assumptions, if either $TM$ or $F$ is spin and
\begin{align}\label{1.2}
k^F>{\rm rk}(F)({\rm rk}(F)-1) \ {\rm on}\ {\rm Supp}({\rm d}f),
\end{align}
then one has
\begin{align}\label{1.3}
\inf (k^F)<0. 
\end{align}
\end{thm}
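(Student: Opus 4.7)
Arguing by contradiction, suppose $k^F\ge 0$ everywhere on $M$. The plan is to build a twisted Dirac-type operator on $M$ which has nonzero Fredholm index by a relative index theorem (using $\deg f\neq 0$) but is forced to have zero index by the positivity of its square; this contradiction will prove the theorem.

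Under the spin hypothesis on $F$ or $TM$, following the \emph{sub-Dirac} construction of~\cite{Z19}, I would build a $\mathbb{Z}_2$-graded Hermitian Clifford bundle $\mathcal{S}$ on $M$ on which $TM=F\oplus F^\perp$ acts as follows: vectors in $F$ act by genuine Clifford multiplication on the $F$-spinor factor, while vectors in $F^\perp$ act as the odd operator $v\wedge\cdot-\iota_v$ on $\Lambda^\ast F^\perp_{\mathbb{C}}$. With the natural connection induced from $\nabla^{TM}$, the corresponding Dirac operator $D^{\mathcal{S}}$ is elliptic on $M$ and satisfies a Lichnerowicz-type identity
\[
 (D^{\mathcal{S}})^2 = (\nabla^{\mathcal{S}})^\ast\nabla^{\mathcal{S}} + \tfrac14 k^F + \mathcal{R}_0,
\]
where $\mathcal{R}_0\in\mathrm{End}(\mathcal{S})$ is a bounded zero-order remainder depending only on the second fundamental form of $F$ and the curvature of $F^\perp$.

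Twist $D^{\mathcal{S}}$ by $E := f^\ast S_{S^n(1)}$, equipped with the pulled back Hermitian connection; since $f$ is locally constant near infinity, $\nabla^E\equiv 0$ outside a compact set. The twisted Lichnerowicz formula acquires an additional zero-order term $\mathcal{R}^E\in\mathrm{End}(\mathcal{S}\otimes E)$. Llarull's sharp pointwise computation, applied only in the $F$-directions in view of the area-decreasing hypothesis along $F$, gives
\[
 \mathcal{R}^E \ge -\tfrac14\,\mathrm{rk}(F)(\mathrm{rk}(F)-1),
\]
with strict inequality wherever $df|_F$ has full rank on some two-plane. Combined with~\eqref{1.2}, this yields $\tfrac14 k^F+\mathcal{R}^E>0$ pointwise on $\mathrm{Supp}(df)$. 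To promote this to a globally positive, Fredholm operator, I would introduce Zhang's deformation
\[
 \widetilde D_t := D^{\mathcal{S},E} + tV,
\]
where $V$ is an odd, self-adjoint bundle endomorphism of $\mathcal{S}\otimes E$ that vanishes in a neighborhood of $\mathrm{Supp}(df)$ and equals a nonzero scalar multiple of a Clifford-type involution outside, built from the canonical $\mathbb{Z}_2$-grading of $S_{S^n(1)}$ pulled back along $f$. Expanding
\[
 \widetilde D_t^2 = (D^{\mathcal{S},E})^2 + t^2 V^2 + t\,[D^{\mathcal{S},E},V],
\]
one sees that $V^2$ is strictly positive off $\mathrm{Supp}(df)$ while $[D^{\mathcal{S},E},V]$ is compactly supported, so for $t$ large $\widetilde D_t^2$ is pointwise positive on all of $M$ and uniformly positive near infinity.

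Uniform positivity near infinity makes $\widetilde D_t$ Fredholm on $L^2$, and the relative index theorem, comparing $\widetilde D_t$ with the standard model in which the target sphere is deformed to a point, gives $\mathrm{ind}(\widetilde D_t)=c_n\,\deg f\neq 0$, just as in~\cite[\S2]{Z19-2}. Global pointwise positivity of $\widetilde D_t^2$ forces $\mathrm{ind}(\widetilde D_t)=0$: contradiction. The main technical obstacle I anticipate is the control of the foliated remainder $\mathcal{R}_0$, which in the sub-Dirac setup is not automatically nonnegative and is not compensated by $k^F$ alone; it must be absorbed into the constant in front of $V^2$ near infinity without disturbing the sharp constant $\mathrm{rk}(F)(\mathrm{rk}(F)-1)$ on $\mathrm{Supp}(df)$ against which~\eqref{1.2} is calibrated.
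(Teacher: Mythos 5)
Your high-level strategy is the right one, and you have correctly assembled most of the ingredients: the sub-Dirac operator, the Llarull-type estimate applied only in the $F$-directions, the Zhang-type endomorphism deformation $V$ to gain Fredholmness near infinity, and the index contradiction via $\deg f\neq 0$. You have also correctly identified \emph{exactly} where the argument will break: the zero-order foliated remainder $\mathcal{R}_0$ (coming from the second fundamental form of $F$ and the $F^\perp$-curvature) is not sign-controlled and is not compensated by $k^F$. But you offer no mechanism for overcoming it, and that is the genuine missing idea rather than a technical loose end. Working directly on $M$ with the sub-Dirac operator, there is no way to absorb $\mathcal{R}_0$ into the constant in front of $V^2$ near infinity while keeping the sharp constant ${\rm rk}(F)({\rm rk}(F)-1)$ on $\mathrm{Supp}(\dd f)$, because $\mathcal{R}_0$ is supported everywhere and is of the same order as $k^F$, not subordinate to it.

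The paper's resolution is precisely what is absent from your proposal: one passes to the Connes fibration $\pi:\mM\to M$, where $F$ lifts (via the leafwise-flat Bott connection) to an integrable $\mF$, and one rescales the metric anisotropically, $g^{T\mM}_{\beta,\gamma}=\beta^2 g^{\mF}\oplus \gamma^{-2}g^{\mF_1^\perp}\oplus g^{\mF_2^\perp}$. In the adiabatic limit $\beta,\gamma\to 0$ (taken with care in that order), the leafwise curvature term becomes $k^{\mF}/(4\beta^2)$ while the analog of your $\mathcal{R}_0$ is $O_{m,R}(1/\beta + \gamma^2/\beta^2)$, hence genuinely negligible compared to $k^{\mF}/(4\beta^2)$; see~(\ref{2.24}). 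Crucially, the deformation used is not just $\varepsilon\widehat{f}^*V/\beta$ but also $\widehat{c}(\widetilde\sigma)/\beta$ in the fiber directions of the Connes fibration (cf.~(\ref{0.15})), which provides coercivity in the noncompact fiber direction and allows truncation to $\widehat\mM_{\mH_{3m},R}$ and closure to a compact manifold $\widetilde\mN_{3m,R}$; the index computation is then done on this closed manifold via Atiyah--Singer (see~(\ref{0.29})--(\ref{0.30})) rather than via a relative index theorem, although the Gromov--Lawson cut-off machinery (here $\varphi_{m,1},\varphi_{m,2}$) is still used to control the transition across the compact core. A minor inaccuracy in your sketch: $V$ does not vanish near $\mathrm{Supp}(\dd f)$; rather $\widehat{f}^*V$ is constructed (from a vector field on $S^n(1)$ with a single zero at a regular value avoided near infinity) so that $(\widehat{f}^*V)^2\ge\delta>0$ \emph{off} $\pi^{-1}(\mathrm{Supp}(\dd f))$, cf.~(\ref{2.11a}); on $\mathrm{Supp}(\dd f)$ itself the positivity comes instead from $k^F - {\rm rk}(F)({\rm rk}(F)-1)\ge\kappa>0$.
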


As an application of Theorem~\ref{t1.1}, we resolve the following conjecture due to Gromov, {appeared in the fourth version of his four lectures,}~\cite[p. 61]{Gr}.

\vspace{.6\baselineskip}
\noindent{\bf Sharp Foliated $\otimes_\varepsilon$-Twisting Conjecture}. {\it Let $X$ be a complete oriented $n$-dimensional Riemannian manifold with a smooth $m$-dimensional, $2\leq m\leq n$, spin foliation $\mathcal{L}$, such that the induced Riemannian metrics on the leaves of ${\mL}$ have their scalar curvatures $>m(m-1)$. Then $X$ admits no smooth area decreasing locally constant at infinity map $f:X\to S^n$ with ${\rm deg}(f)\neq 0$.} 
\vspace{\topsep}

\begin{proof}
  Since an area decreasing map on $X$ is area decreasing along $\mathcal{L}$ automatically, as a consequence of Theorem \ref{t1.1}, Gromov's above conjecture holds.
\end{proof}

{Note that Theorem~\ref{t1.1} also implies that if} the $\mathcal{L}$ spin condition {is} replaced by the $X$ spin condition, {the above conjecture still holds}.

{On the other direction, in~\cite{S}, Su proves a generalization of Llarull's theorem for the foliated compact manifolds. Therefore, Theorem~\ref{t1.1} can also be viewed as a noncompact extension of~\cite{S}. Figure~\ref{fig:1} gives an illustration about the relation between several results.}

\begin{figure}[htbp]
  \centering
  \begin{tikzpicture}[node distance = 2cm, auto]
      \node [block] (init) {Llarull's theorem, compact $M$, $TM$};
      \node [block, below left of=init, xshift=-6em] (l1) {\cite{Gr,Z19-2}, noncompact $M$, $TM$};
      \node [block, below right of=init, xshift=6em] (r1) {\cite{S}, compact $M$, $F$};
      \node [block, below right of=l1, xshift=6em] (fin) {Theorem~\ref{t1.1}, noncompact $M$, $F$};

      \path [line] (init.west) -|  (l1.north);
      \path [line] (init.east) -|  (r1.north);
      \path [line] (l1.south) |-  (fin.west);
      \path [line] (r1.south) |-  (fin.east);

    \end{tikzpicture}
    \caption{The relation between several results.}
    \label{fig:1}
\end{figure}
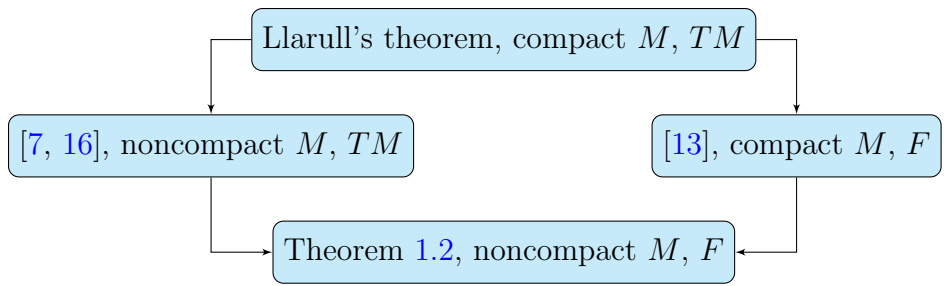

{To compare Theorem~\ref{t1.1} with the related results in the literature further,
we recall that for the enlargeable foliated noncompact manifold,} in \cite{SZ}, Su and Zhang also show a similar estimate on the leafwise scalar curvature. More precisely,
{let $(M,F)$ be a foliated manifold carrying a (not necessarily complete) Riemannian metric $g^{TM}$}. Let $k^F$ be the leafwise scalar curvature associated to $g^F=g^{TM}|_{F}$. {If either $TM$ or $F$ is spin and $g^{TM}$ is enlargeable}, then $\inf(k^F)\leq 0$.
{In fact, the argument in~\cite{SZ} motivates our proof of Theorem~\ref{t1.1} partially.}

We will put the different kinds of deformations of Dirac operators appeared in \cite{SZ}, \cite{Z17} and \cite{Z19-2} together (cf. (\ref{0.15})) to prove Theorem \ref{t1.1}. Still, the sub-Dirac operators constructed in \cite{LZ01} and \cite{Z17}, as well as   the Connes fibration introduced in \cite{Co86}  (cf. \cite{Gr}, \cite[\S 2.1]{Z17}), will play  essential roles in our proof. But the new difficulty in the current case is that the map $f$ is area decreasing {along $F$}, i.e.,\ $f$ only contracts on two {forms in some sense}\footnote{As a contrast, $f$ contracts on one {forms} in~\cite{SZ}.}. Such a weaker assumption on $f$ forces us to construct new cut-off functions to replace $\varphi_{\varepsilon,R,1}$ and $\varphi_{\varepsilon,R,2}$ in \cite[(1.23)]{SZ}.

Recall that in \cite[Theorem 1.17]{GL83}, Gromov and Lawson use a small perturbation of the distance function to prove their relative index theorem. We will adapt this perturbed distance function to construct the cut-off functions needed in the current case. As a result, unlike~\cite{SZ}, the completeness of the manifold is necessary in our proof. As in \cite{Z17}, we only give the proof of Theorem \ref{t1.1} for the $TM$ spin case in detail. The $F$ spin case can be proved similarly as in \cite[\S 2.5]{Z17}.

\subsection{Two non-existence results}
It turns out that our method to prove Theorem \ref{t1.1} can also be used to generalize several classical results about scalar curvature to the {foliated case}.

In \cite{GL83}, Gromov and Lawson introduce the {concept of $\Lambda^2$-enlargeability}. In the foliated case, we use the following variant of \cite[Definition 7.1]{GL83}.

\begin{defn}\label{cc1.3}
A Riemannian metric on a connected foliated manifold $(M,F)$ is called $\Lambda^2$-enlargeable along $F$ if given any $\epsilon>0$, there exist a covering manifold ${M}_\epsilon\to M$ such that either $M_\epsilon$ or $F_\epsilon$ (the lifted foliation of $F$ in $M_\epsilon$) is spin and a smooth map $f_\epsilon:M_\epsilon\to S^{\dim M}(1)$ which is $(\epsilon,\Lambda^2)$-contracting along $F_\epsilon$ (with respect to the lifted metric), constant near infinity and of non-zero degree.
\end{defn}

Gromov and Lawson use the $\Lambda^2$-enlargeable metrics to define the $\Lambda^2$-enlargeable manifolds. We adapt their definition \cite[Definition 6.4]{GL83} to our situation as follows.

 \begin{defn}
 A connected (not necessarily compact) foliated manifold {$(M,F)$} is said to be $\Lambda^2$-enlargeable along $F$ if any Riemannian metric (not necessarily complete!) on {$M$} is $\Lambda^2$-enlargeable along {$F$}.
\end{defn}

As before, if {$F=TM$}, the above two definitions coincide with the usual definition of the $\Lambda^2$-enlargeable metric or manifold.

In \cite{GL83}, Gromov and Lawson prove the following famous theorem about $\Lambda^2$-enlargeable metrics.
{Recall that a function on a manifold is called uniformly positive if the infimum of this function is strictly positive.}
\begin{thm}{\rm (Gromov-Lawson, \cite[Theorem 7.3]{GL83})}\label{th1.6}
No complete Riemannian metric which is $\Lambda^2$-enlargeable can have uniformly positive scalar curvature.
\end{thm}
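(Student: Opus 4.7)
My plan is to argue by contradiction using a twisted Dirac operator together with the Gromov--Lawson relative index theorem. Suppose $(M,g^{TM})$ is a complete $\Lambda^2$-enlargeable Riemannian manifold of dimension $n$ with $k^{TM}\ge\kappa_0>0$ uniformly. For each $\epsilon>0$ the hypothesis supplies a spin covering $p_\epsilon\colon M_\epsilon\to M$ carrying a smooth map $f_\epsilon\colon M_\epsilon\to S^n(1)$ which is $(\epsilon,\Lambda^2)$-contracting, constant outside a compact set $K_\epsilon\subseteq M_\epsilon$, and of nonzero degree. The lifted metric is complete and its scalar curvature is still bounded below by $\kappa_0$.

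Let $E_0=f_\epsilon^{\ast}S(TS^n(1))$ be the pulled back complex spinor bundle of the sphere, equipped with the pulled back connection; it is canonically trivialised outside $K_\epsilon$. I would then consider the twisted Dirac operator
$$D_\epsilon\colon C^\infty\bigl(M_\epsilon,S(TM_\epsilon)\otimes E_0\bigr)\to C^\infty\bigl(M_\epsilon,S(TM_\epsilon)\otimes E_0\bigr)$$
and exploit the twisted Lichnerowicz formula
$$D_\epsilon^2=\nabla^{\ast}\nabla+\tfrac14 k^{TM_\epsilon}+c(R^{E_0}).$$
Since $R^{E_0}$ is the pullback of the bounded spinor curvature of the round sphere and $f_\epsilon$ is $(\epsilon,\Lambda^2)$-contracting, expanding $c(R^{E_0})$ in an orthonormal frame and using antisymmetry of the curvature yields a pointwise estimate $\abs{c(R^{E_0})}_x\le C(n)\,\epsilon$ for every $x\in M_\epsilon$, where $C(n)$ depends only on the dimension.

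Choosing $\epsilon$ so small that $C(n)\epsilon<\kappa_0/8$, the zeroth order part of $D_\epsilon^2$ is uniformly bounded below by $\kappa_0/8$. Completeness of $M_\epsilon$ then allows the standard Bochner argument, showing $D_\epsilon$ is $L^2$-invertible and, in particular, has vanishing $L^2$-index. On the other hand the Gromov--Lawson relative index theorem \cite[Theorem 4.18]{GL83}, applied to $D_\epsilon$ and the trivially twisted Dirac operator (which coincide outside $K_\epsilon$), identifies the difference of indices with a local topological quantity; for $E_0$ the pulled back spinor bundle of $S^n(1)$ this reduces, via an Atiyah--Singer computation on a closed approximation, to a nonzero multiple of $\deg(f_\epsilon)$. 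Since the trivially twisted operator also has vanishing index by the same Lichnerowicz argument, combining the two yields $0=\mathrm{const}\cdot\deg(f_\epsilon)$, contradicting $\deg(f_\epsilon)\neq 0$.

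The main obstacle is the last paragraph: one must verify that the relative index genuinely captures $\deg(f_\epsilon)$ with a nonvanishing constant, and that the relative index theorem applies on the possibly non-compact, possibly non-normal cover $M_\epsilon$. The first point is handled by reducing to a closed Atiyah--Singer calculation after trivialising $E_0$ near infinity; the second by working with the $L^2$-index on the complete cover and invoking the form of the relative index theorem established in \cite{GL83}.
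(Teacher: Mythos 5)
The statement you were asked to prove (Theorem~\ref{th1.6}) is quoted in the paper from Gromov--Lawson and is subsumed in the paper as the special case $F=TM$ of its Theorem~\ref{th6}. Your proposal follows the \emph{original} Gromov--Lawson strategy: twist the Dirac operator on the non-compact cover $M_\epsilon$ by the pulled-back spinor bundle $E_0=f_\epsilon^*S(TS^n(1))$, bound the curvature term in the Lichnerowicz formula by $C(n)\epsilon$ via the $(\epsilon,\Lambda^2)$-contraction, conclude $L^2$-invertibility for $\epsilon$ small, and contradict the nonvanishing relative index furnished by the Gromov--Lawson relative index theorem. That is a legitimate and essentially correct route, but it is \emph{not} the route the paper takes: the paper's Section~4 proof (even when specialized to $F=TM$, collapsing the Connes fibration) avoids the relative index theorem on a non-compact manifold entirely; instead it deforms the twisted Dirac operator by a zeroth-order endomorphism $\varepsilon \widehat f^*V/\beta$, cuts off $M_\epsilon$ to a compact piece $\widehat M_{H_{3m}}$ using regularized distance functions $\psi_{m,i}$ along the lines of \cite[Theorem 1.17]{GL83}, glues to form a closed manifold, and uses an Atiyah--Singer index computation for a zeroth-order pseudodifferential operator on the closed piece. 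The paper's route buys uniform control in the foliated setting, where the Lichnerowicz curvature estimate only controls the leaf directions, and where the authors' cut-off/deformation mechanism replaces the global relative index argument. Your route is shorter and more classical for $F=TM$, but does not by itself generalize to the foliated case.

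Two points you should tighten. First, the $\Lambda^2$-enlargeable definition sends $f_\epsilon$ into $S^{\dim M}(1)$; if $\dim M$ is odd, the spinor bundle of $S^{\dim M}$ carries no $\mathbb{Z}_2$-grading, the Euler class vanishes, and your topological term is automatically zero. You must pass to $M\times S^1$ and a suitable suspension of $f_\epsilon$, as the paper does in its odd-dimensional reduction. Second, when you write $E_0=f_\epsilon^*S(TS^n(1))$ you should make the $\mathbb{Z}_2$-grading $E_0=f_\epsilon^*S_+\oplus f_\epsilon^*S_-$ explicit: the relative index you want is $\mathrm{ind}\,D^{f_\epsilon^*S_+}-\mathrm{ind}\,D^{f_\epsilon^*S_-}=\pm 2\deg(f_\epsilon)$, and this is the quantity that the relative index theorem computes from the compactly supported difference class; with the total bundle alone the relevant Chern character cancels. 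Both points are addressable by standard means, but as written the proposal is incomplete on them.
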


By the same argument in the proof of Theorem \ref{t1.1}, we can extend Theorem \ref{th1.6} as follows. 
\begin{thm}\label{th6}
{Let $(M,F)$ be a foliated manifold. For any complete Riemannian metric $g^{TM}$ on $M$ which is $\Lambda^2$-enlargeable along $F$, $k^F$, the leafwise scalar curvature of $g^{TM}$ along $F$, cannot be uniformly positive.}
\end{thm}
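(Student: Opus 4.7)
The proof proceeds by contradiction, adapting the argument of Theorem~\ref{t1.1} to exploit the $\epsilon$-smallness built into the $\Lambda^2$-enlargeability along $F$. Suppose, for contradiction, that $k^F\geq c$ for some constant $c>0$. For each $\epsilon>0$, Definition~\ref{cc1.3} supplies a covering $\pi_\epsilon: M_\epsilon\to M$ together with a smooth map $f_\epsilon: M_\epsilon\to S^n(1)$ which is $(\epsilon,\Lambda^2)$-contracting along the lifted foliation $F_\epsilon$, is constant near infinity, has non-zero degree, and is such that either $M_\epsilon$ or $F_\epsilon$ is spin. The pulled-back metric $\pi_\epsilon^* g^{TM}$ is complete on $M_\epsilon$, and its leafwise scalar curvature $k^{F_\epsilon}=\pi_\epsilon^* k^F$ still satisfies $k^{F_\epsilon}\geq c>0$ everywhere.

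The plan is to run the entire proof of Theorem~\ref{t1.1} on the complete foliated manifold $(M_\epsilon,F_\epsilon)$ with $f_\epsilon$ in place of $f$: construct the sub-Dirac operator on the Connes fibration over $M_\epsilon$ as in~\cite{LZ01,Z17}, twist by the pullback of the relevant spinor bundle from $S^n(1)$, apply the one-parameter family of deformations (cf.~(\ref{0.15})) together with the cut-off functions built from the Gromov-Lawson perturbed distance functions of~\cite[Theorem~1.17]{GL83}, and invoke the relative index theorem. The only point in the proof of Theorem~\ref{t1.1} at which the area-decreasing assumption on $f$ is actually used is in the pointwise bound on the twisting curvature restricted to $F$-bivectors; under the $(\epsilon,\Lambda^2)$-contracting hypothesis this bound gets multiplied by the factor $\epsilon$, so the competing curvature term in the resulting Lichnerowicz-type estimate is of the order $\epsilon\,\mathrm{rk}(F)(\mathrm{rk}(F)-1)$ rather than $\mathrm{rk}(F)(\mathrm{rk}(F)-1)$.

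Choosing $\epsilon>0$ so small that this term is strictly less than $c/8$, the resulting deformed twisted sub-Dirac operator $D_\epsilon$ on $M_\epsilon$ satisfies $D_\epsilon^2\geq c/8$ uniformly, hence has trivial $L^2$-kernel. On the other hand, since $f_\epsilon$ is constant near infinity and has non-zero degree, the relative index computation that powers Theorem~\ref{t1.1} identifies the index of $D_\epsilon$ with a non-zero multiple of $\deg(f_\epsilon)$, giving the required contradiction. The case where only $F_\epsilon$ is spin is reduced to the $M_\epsilon$ spin case by the modification of~\cite[\S 2.5]{Z17}, exactly as in Theorem~\ref{t1.1}.

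The main technical obstacle is to verify that the cut-off functions and the family of deformation parameters appearing in the proof of Theorem~\ref{t1.1} can be chosen so that the final pointwise Lichnerowicz-type inequality genuinely carries the $\epsilon$-factor in front of the twisting curvature, uniformly, while the lower bound $c$ on $k^{F_\epsilon}$ stays fixed. Once this uniformity is secured, the remaining argument is Gromov and Lawson's original proof of Theorem~\ref{th1.6}, transplanted to the foliated setting via the sub-Dirac operator of~\cite{LZ01} and the Connes fibration of~\cite{Co86}.
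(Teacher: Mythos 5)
Your proposal is correct and follows essentially the same route as the paper: pass to the covering $M_\epsilon$ furnished by the $\Lambda^2$-enlargeability, observe that the $(\epsilon,\Lambda^2)$-contracting hypothesis inserts a factor of $\epsilon$ in the twisting-curvature term (the paper's pointwise estimate (\ref{aa6})), choose $\epsilon$ small relative to the assumed lower bound on $k^F$ to replace (\ref{2.25}) and (\ref{g2.24}), and rerun the Connes-fibration/sub-Dirac/index argument of Section~\ref{s2}. The only omission is that you do not separately note the case where $M_\epsilon$ is compact (handled in the paper by citing \cite[Section~1.1]{Z19}), but this is a minor detail and the overall argument is the paper's.
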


{If we further assume that $M$ itself is $\Lambda^2$-enlargeable, Gromov and Lawson in {\cite{GL83}}} prove the following famous theorem, {which strengthens the result of Theorem~\ref{th1.6} in the following way.}
 \begin{thm}{\rm (Gromov-Lawson, \cite[Theorem 6.12]{GL83})}\label{th1.4}
 A manifold $M$ which is $\Lambda^2$-enlargeable, cannot carry a complete metric of positive scalar curvature. 
 \end{thm}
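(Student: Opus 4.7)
The plan is to reduce the statement to Theorem~\ref{th1.6}. Suppose, for a contradiction, that $M$ is $\Lambda^2$-enlargeable and admits a complete Riemannian metric $g_0$ with scalar curvature $\kappa_{g_0}>0$ everywhere. If $\inf_M\kappa_{g_0}>0$, then $g_0$ itself is complete with uniformly positive scalar curvature. Since the manifold-level enlargeability hypothesis forces \emph{every} Riemannian metric on $M$ to be $\Lambda^2$-enlargeable, $g_0$ is in particular a $\Lambda^2$-enlargeable metric, and Theorem~\ref{th1.6} yields the contradiction.

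In the remaining case, when $\kappa_{g_0}>0$ pointwise but $\inf_M \kappa_{g_0}=0$, I would manufacture a new complete metric $g=u^{4/(n-2)}g_0$ with $\kappa_g\geq 1$ by a conformal deformation. Recalling that
\[
\kappa_g=u^{-(n+2)/(n-2)}\Bigl(-\tfrac{4(n-1)}{n-2}\Delta_{g_0}u+\kappa_{g_0}\,u\Bigr),
\]
this amounts to finding a smooth positive $u$ on $M$ solving the super-Yamabe inequality
\[
-\tfrac{4(n-1)}{n-2}\Delta_{g_0}u+\kappa_{g_0}\,u\;\geq\; u^{(n+2)/(n-2)},
\]
such that the resulting conformal metric $g$ remains complete. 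I would build $u$ by exhausting $M$ with a nested sequence of smooth compact domains $K_1\subset K_2\subset\cdots$, solving a suitable Dirichlet problem on each $K_i$ via the sub-/super-solution method, and extracting a diagonal subsequence converging in $C^2_{\mathrm{loc}}$ to a global positive solution. Completeness of $g$ would then be enforced by arranging controlled (e.g.\ at most polynomial) growth of $u$ at infinity, or by modifying $u$ outside a sufficiently large compact set without destroying the scalar curvature lower bound.

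Once such $g$ is in hand, the manifold-level enlargeability of $M$ implies that $g$ is itself a $\Lambda^2$-enlargeable metric, so Theorem~\ref{th1.6} applied to $g$ delivers the sought contradiction.

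The main obstacle is the PDE construction of the conformal factor $u$. Solving a super-Yamabe-type inequality on a non-compact complete manifold while simultaneously preserving completeness of the resulting conformal metric is delicate: it requires sub-/supersolutions carefully tailored to the (possibly degenerating) original scalar curvature $\kappa_{g_0}$, together with growth control on $u$ at infinity. Standard tools from conformal deformation theory, in the spirit of Kazdan--Warner and the sub-/super-solution method on exhaustions, would need to be deployed with care.
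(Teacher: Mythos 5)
Your reduction of the easy case ($\inf_M \kappa_{g_0}>0$) to Theorem~\ref{th1.6} is correct, and it is indeed how the implication goes for that sub-case. But the second case, where $\kappa_{g_0}>0$ pointwise with $\inf_M \kappa_{g_0}=0$, is the entire content of the theorem, and here the conformal-change strategy has a genuine gap. You would need to know that any complete metric with pointwise positive scalar curvature can be conformally deformed to a complete metric with scalar curvature $\geq 1$; this is not true in general. If it were, Theorem~\ref{th1.4} would just be a corollary of Theorem~\ref{th1.6} and Gromov--Lawson would not have needed to prove them as distinct theorems. The obstruction is precisely the competition you flag as ``delicate'': keeping the conformal metric $g=u^{4/(n-2)}g_0$ complete forces $u$ to stay bounded below away from $0$ (at least along geodesics going to infinity), while forcing $\kappa_g\geq 1$ on a region where $\kappa_{g_0}\to 0$ forces the super-Yamabe inequality $-\tfrac{4(n-1)}{n-2}\Delta_{g_0}u+\kappa_{g_0}u\geq u^{(n+2)/(n-2)}$ to be driven by the Laplacian term alone, which is incompatible with uniform lower bounds on $u$. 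Appealing to exhaustion-plus-sub/supersolution and ``growth control at infinity'' does not resolve this; the sketch assumes the conclusion.

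The route actually used by Gromov and Lawson (and adapted in the paper's Section~5 for Theorem~\ref{t5}) avoids any PDE. Since $M$ (not just a metric on $M$) is $\Lambda^2$-enlargeable, \emph{every} metric on $M$, including incomplete ones, is $\Lambda^2$-enlargeable -- see Definition~1.4. One applies this to the (generally incomplete) rescaled metric $\kappa_{g_0}\,g_0$. For each $\epsilon>0$ one thus gets a cover $M_\epsilon$ and a map $f_\epsilon$ of nonzero degree that is $(\epsilon,\Lambda^2)$-contracting \emph{with respect to $\kappa_{g_0}\,g_0$} and constant near infinity. Crucially, one then runs the twisted Dirac-operator argument with respect to the lifted \emph{original} metric $g_0$, whose completeness supplies the needed self-adjointness / relative-index setup. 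Because $f_\epsilon$ is $(\epsilon,\Lambda^2)$-contracting for $\kappa_{g_0}g_0$, the Lichnerowicz curvature term of the twisted bundle satisfies a pointwise bound of the form $C_1\epsilon\,\kappa_{g_0}(x)\,|s|^2(x)$ -- this is exactly the foliated estimate (\ref{eq:est-kf}) in the paper. Choosing $\epsilon$ small but uniform (e.g.\ $\epsilon = 1/(4C_1 q^2)$ in the paper's notation) the curvature term is dominated by the $\kappa_{g_0}/4$ coming from the Lichnerowicz formula, pointwise, with no uniform positivity of $\kappa_{g_0}$ required. This gives the vanishing that contradicts $\deg(f_\epsilon)\neq 0$. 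The conformal rescaling $\kappa_{g_0}g_0$ thus serves as an input to the \emph{enlargeability hypothesis}, not as a new background metric on which to do spectral analysis, which is exactly why completeness of the rescaled metric is irrelevant and why the argument bypasses the PDE obstacle that stops your plan.
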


The following result is a foliated extension of Theorem \ref{th1.4}.

 \begin{thm}\label{t5}
{Let $(M,F)$ be a foliated manifold}. If $M$ is $\Lambda^2$-enlargeable along $F$, then $M$ cannot carry a complete metric $g^{TM}$ {satisfying that} $k^F$, the leafwise scalar curvature of $g^{TM}$ along $F$, is positive everywhere.
\end{thm}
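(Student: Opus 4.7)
The plan is to reduce Theorem~\ref{t5} to Theorem~\ref{th6} via a conformal modification of the given metric, paralleling the way Gromov and Lawson derive Theorem~\ref{th1.4} from Theorem~\ref{th1.6}. Assume for contradiction that $M$ carries a complete Riemannian metric $g^{TM}$ whose leafwise scalar curvature $k^F$ is positive at every point. The goal is to produce from $g^{TM}$ a second complete Riemannian metric $\widehat{g}^{TM}$ on $M$ whose leafwise scalar curvature $\widehat{k}^F$ is \emph{uniformly} positive. Once such a $\widehat{g}^{TM}$ is in hand, the hypothesis that $(M,F)$ is $\Lambda^2$-enlargeable along $F$ as a manifold ensures, by Definition~\ref{cc1.3} applied to $\widehat{g}^{TM}$, that this new metric is also $\Lambda^2$-enlargeable along $F$. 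Applying Theorem~\ref{th6} to $\widehat{g}^{TM}$ then produces the desired contradiction.

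The construction of $\widehat{g}^{TM}$ is the technical heart of the argument. I would set $\widehat{g}^{TM} = u^{4/(m-2)}\, g^{TM}$, where $m = {\rm rk}(F) \geq 3$ and $u : M \to [1,\infty)$ is a smooth positive function to be chosen (the $m=2$ case, and the variant in which only the leafwise part of the metric is rescaled, require only routine modifications). Since $u\geq 1$, one has $\widehat{g}^{TM} \geq g^{TM}$, so geodesic distances cannot decrease and $\widehat{g}^{TM}$ inherits completeness from $g^{TM}$. The conformal change of the ambient metric rescales each induced leaf metric by the same factor, and so the leafwise scalar curvature transforms by the standard $m$-dimensional conformal-change formula applied on each leaf:
\begin{equation*}
  \widehat{k}^F\, u^{(m+2)/(m-2)} \;=\; k^F\, u \;-\; \tfrac{4(m-1)}{m-2}\, \Delta^F u,
\end{equation*}
where $\Delta^F$ denotes the leafwise Laplacian.

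The main obstacle is to choose $u$ so that the right-hand side above is bounded below by $\delta\, u^{(m+2)/(m-2)}$ for some $\delta > 0$ uniformly on $M$, while keeping $u\geq 1$. The competing requirements are that $u$ should grow in regions where $k^F$ becomes small (to compensate in the leading term), yet the resulting $\Delta^F u$ must remain under control. Adapting the construction in the proof of~\cite[Theorem~6.12]{GL83} to the foliated setting, I would take a locally finite exhaustion $\{K_j\}$ of $M$ by precompact open sets with $k^F \geq c_j > 0$ on $K_j$, and assemble $u$ via a subordinate partition of unity as an essentially piecewise-constant smooth function whose value on $K_j$ is large enough so that $c_j \cdot u \geq 2\delta \cdot u^{(m+2)/(m-2)}$ holds there. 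By making the transitions between constant values slow enough on the scale of the leafwise geometry, one arranges $\tfrac{4(m-1)}{m-2}|\Delta^F u| \leq \tfrac12 k^F u$ pointwise, whence $\widehat{k}^F \geq \delta$ uniformly. This completes the reduction to Theorem~\ref{th6} and yields the contradiction.
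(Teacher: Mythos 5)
Your reduction to Theorem~\ref{th6} breaks down at its key step: the conformal factor goes in the wrong direction. Under $g\mapsto \lambda g$ scalar curvature scales like $\lambda^{-1}$, so to boost $\widehat{k}^F$ where $k^F$ is small you must \emph{shrink} the leaf metric there, not enlarge it. Your own inequality shows this: since $(m+2)/(m-2)>1$, the requirement $c_j\, u \geq 2\delta\, u^{(m+2)/(m-2)}$ forces $u \leq (c_j/2\delta)^{(m-2)/4}$, which tends to $0$ as $c_j\to 0$; this is incompatible with $u\geq 1$ and with completeness of $\widehat{g}^{TM}$. The obstruction is not technical: Theorem~\ref{th1.4} is genuinely stronger than Theorem~\ref{th1.6} and cannot be obtained from it by any such rescaling. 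Already for $F=TM$, a paraboloid metric on $\R^2$ is complete with everywhere positive scalar curvature, yet $\R^2$ carries no complete metric of uniformly positive scalar curvature (Bonnet--Myers would force compactness), so no choice of $u$ can accomplish what you want.

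The paper's argument (following the actual proof of \cite[Theorem 6.12]{GL83}) uses the auxiliary metric $k^F g^{TM}$ quite differently: it is never required to be complete or to have good curvature, and no analysis is done with it. It is fed only into the \emph{enlargeability hypothesis} --- since $M$ is $\Lambda^2$-enlargeable along $F$ as a manifold, Definition~\ref{cc1.3} applies to the (possibly incomplete) metric $k^F g^{TM}$ and produces covers $M_\epsilon$ and maps $f_\epsilon$ that are $(\epsilon,\Lambda^2)$-contracting along $F_\epsilon$ for the lift of $k^F g^{TM}$. Because two-vectors scale by the conformal factor itself, such a map is pointwise $(\epsilon\, k^{F_\epsilon},\Lambda^2)$-contracting with respect to the original complete metric $g^{TM_\epsilon}$. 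One then runs the whole Dirac-operator argument of Section~\ref{s2} with $g^{TM_\epsilon}$; the curvature term in the Bochner formula is now bounded by $\tfrac{1}{2\beta^2}q(q-1)C_1\epsilon\, k^{\mF_\epsilon}(x)|s|^2(x)$ as in (\ref{eq:est-kf}), and the pointwise factor $k^{\mF_\epsilon}(x)$ is absorbed by the Lichnerowicz term $k^{\mF_\epsilon}/4\beta^2$ after fixing $\epsilon=1/(4C_1q^2)$. Uniform positivity is then needed only on the compact set $\widetilde{\pi}_\epsilon^{-1}(\supp(\dd f_\epsilon))$, where it is automatic. If you want to salvage your write-up, replace the construction of $\widehat{g}^{TM}$ by this pointwise cancellation; the reduction to Theorem~\ref{th6} as stated cannot be repaired.
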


Theorem \ref{th6} and Theorem \ref{t5} extend \cite[Theorem 1.7]{BH19} and \cite[Theorem 0.2]{Z19} to the noncompact situation. We would like to mention that Benameur and Heitsch \cite{BH20} have  also studied non-existence of positive scalar curvature metrics on noncompact foliated manifolds.

The rest of the paper is organized as follows. In Section~\ref{s2}, we prove Theorem \ref{t1.1} for the even dimensional case. In Section~\ref{sec:odd}, we prove Theorem \ref{t1.1} for the odd dimensional case. In Section~\ref{sec:th6}, we prove Theorem \ref{th6}. In Section~\ref{sec:th5}, we prove Theorem \ref{t5}.

\section{Proof of Theorem \ref{t1.1}: the even dimensional case}\label{s2}

\setcounter{equation}{0}

In this section, we prove Theorem \ref{t1.1} for the even dimensional case. In fact, we first show this theorem under an additional assumption that $f$ is constant near the infinity in Subsections~\ref{s1.1}-\ref{s1.4}. We hope that in this case, the idea behind the proof is easier to undertand. More specifically, in Subsection \ref{s1.1}, we recall the basic geometric setup. In Subsection \ref{s1.2}, we review the definition of the Connes fibration and explain how to lift the geometric data to the Connes fibration. In Subsection \ref{s1.3}, we study the deformed sub-Dirac operators on the Connes fibration. In Subsection \ref{s1.4}, we finish the proof of Theorem \ref{t1.1} for the even dimensional case with the above simplified assumption. In Subsection~\ref{sub:locally-const-near}, we discuss the modifications needed for the general case.

\subsection{The basic geometric setup}\label{s1.1}
Let $M$ be a noncompact even dimensional Riemannian manifold of dimension $n$ carrying a {complete} Riemannian metric $g^{TM}$ and $F$ an integrable subbundle of the tangent bundle $TM$. Let $S^n(1)$ be the standard $n$-dimensional unit sphere carrying its canonical metric. {As explained in Introduction, we further assume that $TM$ is spin.}

We now assume that $f:M\to S^n(1)$ is a smooth map, which is area decreasing along $F$. Except in the last subsection of this section, we also assume that $f$ is constant near infinity\footnote{That is, $f$ is a constant map outside a compact subset of $M$.} satisfying
\begin{align}\label{2.2}
{\rm deg}(f)\neq 0.
\end{align}

Let $\dd f:TM\to TS^n (1)$ be the differential of $f$. The support of $\dd f$ is defined to be ${\rm Supp}(\dd f)=\overline{\{x\in M:\dd f(x)\neq 0\}}$.

Let $g^F=g^{TM}|_F$ be the induced Euclidean metric on $F$. Let $k^F\in C^\infty(M)$ be the  leafwise scalar curvature associated to $g^F$ (cf. \cite[(0.1)]{Z17}). To show Theorem~\ref{t1.1}, we argue by contradiction. Assume that (\ref{1.3}) does not hold, that is, 
\begin{equation}
  \label{eq:kf-lb}
\inf(k^F)\geq 0.
\end{equation}

Let $F^\perp$ be the orthogonal complement to $F$, i.e., we have the orthogonal splitting 
 \begin{align}\label{0.3}
TM=F\oplus F^\perp,\ \ \ g^{TM}=g^F\oplus g^{F^\perp}.
\end{align}

Following \cite[Theorem 1.17]{GL83}, we choose a fixed point $x_0\in M$ and let $d:M\to \mathbb{R}^+$ be a regularization of the distance function ${\rm dist}(x,x_0)$ such that
{
  \begin{equation}
    \label{eq:fun-d}
  |\nabla d|(x)\leq 3/2,
\end{equation}
for any $x\in M$.}

Set
\begin{equation*}\label{728}
  B_m=\{x\in M: d(x)\leq m\},\ m\in \mathbb{N}.
\end{equation*}
Since the Riemannian metric $g^{TM}$ is complete, $B_m$ is compact.\footnote{In fact, if there is a smooth function $l$ on $M$ satisfying properties similar to $d$, that is, $|\nabla l|$ is bounded and $l^{-1}((-\infty, m])$, $m\in \mathbf{N}$, is compact, by~\cite{GW}, $M$ must be a complete manifold.}

Let $K\subseteq M$ be a compact subset such that $f$ is constant outside $K$, that is,
\begin{equation*}
  f(M\setminus K)=z_0\in S^n(1).
\end{equation*}
Since $K$ is compact, we can choose a sufficiently large $m$ such that $K\subseteq B_m$. This implies
\begin{equation}
  \label{eq:df-b}
  \supp(\dd f) \subseteq K \subseteq B_m.
\end{equation}

Following \cite{GL83}, we take a compact hypersurface $H_{3m}\subseteq M\setminus B_{3m}$, cutting $M$ into two parts such that the compact part, denoted by $M_{H_{3m}}$, contains $B_{3m}$. Then $M_{H_{3m}}$ is a compact smooth manifold with boundary $H_{3m}$.

To make the gluing process in due course easy to understand, we deform the metric near $H_{3m}$ a little bit as follows. Let $g^{TH_{3m}}$ be the induced metric on $H_{3m}$.
On the product manifold $H_{3m} \times [-1,2]$, we construct a metric as follows.
Near the boundary $H_{3m}\times \{-1\}$ of $H_{3m} \times [-1, 2]$, that is, $H_{3m} \times [-1, -1+ \varepsilon')$, by using the geodesic normal coordinate of $H_{3m} \subseteq M_{H_{3m}}$, we can identify $H_{3m} \times [-1, -1+ \varepsilon')$ with a neighborhood of $H_{3m}$, {denoted by} $U$, in $M_{H_{3m}}$ via a diffeomorphism $\iota$.
Now, we require the metric on $H_{3m} \times [-1, -1+\varepsilon')$ to be the pull-back metric obtained from that of $U$ by $\iota$.
In the same way, we can construct a metric near the boundary $H_{3m}\times \{2\}$ of $H_{3m} \times [-1, 2]$, i.e.,\ $H_{3m} \times (2-\varepsilon'', 2]$.
Meanwhile, on $H_{3m}\times [0,1]$, we give the product metric {constructed} by $g^{TH_{3m}}$ and the standard metric on $[0,1]$. 
Finally, the metric on $H_{3m} \times [-1,2]$ is a smooth extension of the metrics on the above three pieces.

Let $M'_{H_{3m}}$ be another copy of $M_{H_{3m}}$ with the same metric and the opposite orientation.
Let $\iota'$ be the diffeomorphism, the isometry actually, from $H_{3m} \times (2-\varepsilon'', 2]$ to a neighborhood of $\partial M'_{H_{3m}}$, $U'$, in $M'_{H_{3m}}$.
On the disjoint union,
\begin{equation*}
  M^{\circ}_{H_{3m}} \sqcup H_{3m} \times (-1,2) \sqcup M'^{,\circ}_{H_{3m}},
\end{equation*}
we consider the equivalent relation $\sim$ given by $x_1 \sim x_2$ if and only if $x_1\in U^{\circ}$, $x_2\in H_{3m} \times (-1, -1+\varepsilon')$ (resp.\ $x_1\in U'^{,\circ}$, $x_2\in H_{3m} \times (2-\varepsilon'', 2)$) and $x_1= \iota(x_2)$ (resp.\ $x_1 = \iota'(x_2)$).
As a set, we define the gluing manifold $\widehat{M}_{H_{3m}}$ to be
\begin{equation*}
  \widehat{M}_{H_{3m}} = (M^{\circ}_{H_{3m}} \sqcup H_{3m} \times (-1,2) \sqcup M'^{,\circ}_{H_{3m}})/\sim,
\end{equation*}
endowed with the differentiable structure associated with the open cover
\begin{equation*}
  \{M^{\circ}_{H_{3m}}, H_{3m} \allowbreak\times (-1,2), M'^{,\circ}_{H_{3m}}\}.
\end{equation*}
Moreover, since $\iota$ and $\iota'$ are isometries with respect to the metrics on
\begin{equation*}
  \{M^{\circ}_{H_{3m}}, H_{3m} \times (-1,2), M'^{,\circ}_{H_{3m}}\},
\end{equation*}
$\widehat{M}_{H_{3m}}$ also inherits a metric from this open cover.
From now on, we view $M_{H_{3m}}$, $M'_{H_{3m}}$ and $H_{3m}\times [-1,2]$ as submanifolds of $\widehat{M}_{H_{3m}}$.

{Figure~\ref{fig:tp} helps to explain this gluing procedure.}

\begin{figure}[ht]
  \centering
  \includegraphics{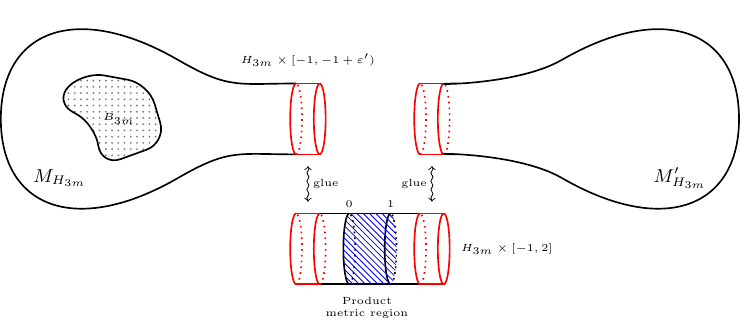}
  \caption{Gluing three parts.}
  \label{fig:tp}
\end{figure}

\subsection{The Connes fibration}\label{s1.2}
By following \cite[\S 5]{Co86} (cf. \cite[\S 2.1]{Z17}), let $ \pi:\mM\rightarrow M$ be the Connes
fibration over $M$ such that for any $x\in M$, $\mM_{x}=\pi^{-1}(x)$
is the space of Euclidean metrics on the linear space $T_xM/F_{x}$.
Let  $T^V\mM$ denote the vertical tangent bundle of the fibration
$\pi:\mM\rightarrow M$. Then it carries a natural metric
$g^{T^V\mM}$ such that   any two points $p,\,q\in \mM_{x}$ with $x\in M$ can be joined by a unique geodesic along $\mM_{x}$. Let $d^{\mM_{x}}(p,q)$ denote the length of this geodesic.  
  
By  using the Bott connection
  on $TM/F$ (cf. \cite[(1.2)]{Z17}), which is leafwise flat, one  lifts $F$ to an integrable subbundle
$\mF$ of $T\mM$. 
  Then $g^{F}$   lifts to a Euclidean metric $g^{\mF}= \pi^* g^{F} $ on $\mF$.

  Let $\mF_{1}^\perp\subseteq T\mM$ be a subbundle, which is  transversal to $\mF\oplus T^V\mM$,   such that we have a
splitting $T\mM=(\mF \oplus T^V \mM)\oplus\mF_{1}^\perp$. Then
$\mF_{1}^\perp$ can be identified with $T\mM/(\mF \oplus T^V \mM)$
and carries a canonically induced metric $g^{\mF_{1}^\perp}$.
We denote  $\mF_{2}^\perp$ to be $T^V\mM$.

  The metric $g^{F^\perp}$   in (\ref{0.3}) determines a canonical embedded section $s: M\hookrightarrow \mM$.
For any $p\in\mM$, set 
\begin{equation*}
  \rho(p)=d^{\mM_{\pi(p)}}(p,s(\pi(p) )).
\end{equation*}

For any $  \beta,\ \gamma>0$,  following  \cite[(2.15)]{Z17}, let $g_{\beta,\gamma}^{T\mM}$ be the   metric    on $T\mM$  defined by
the
  orthogonal splitting,
\begin{align}\label{0.6}\begin{split}
       T\mM =   \mF\oplus \mF^\perp_{1}\oplus \mF^\perp_{2},  \ 
\  \  \
g^{T\mM}_{\beta,\gamma}= \beta^2   g^{\mF}\oplus\frac{
g^{\mF^\perp_{1}}}{ \gamma^2 }\oplus g^{\mF^\perp_{2}}.\end{split}
\end{align}

For any $R>0$, let $ \mM_{R}$ be the smooth manifold with boundary
defined by
\begin{align*}
\mM_{R}=\left\{ p\in \mM\ :\  \rho(p)\leq R \right\}.
\end{align*}

Set $\mH_{3m}= \pi^{-1}(H_{3m})$ and 
\begin{align*}\label{0.8} 
\mM_{\mH_{3m},R} = \left(\pi^{-1}
\left(M_{H_{3m}}\right)\right)\cap \mM_{R},\; { \mH_{3m,R} = \mH_{3m}\cap \mM_{R}}.
\end{align*}

{Consider another copy $\mM_{\mH_{3m},R} '$ of $\mM_{\mH_{3m},R} $ carrying the metric $g^{T\mM'_{3m,R}}$ defined by equation (\ref{0.6}) with $\beta=\gamma=1$.} We glue $\mM_{\mH_{3m},R} $, $\mM_{\mH_{3m},R} '$ and $\mH_{3m,R}\times [-1,2]$ together to get a manifold $\widehat{\mM}_{\mH_{3m},R}$ as we have done for $\widehat{M}_{H_{3m}}$. The difference is that $\widehat{\mM}_{\mH_{3m},R}$ is a smooth manifold with boundary. To write the boundary manifold explicitly, we note that the boundary of $\mM_{\mH_{3m},R} $ consists of two smooth pieces of top dimension: { one is $\mH_{3m,R}$, another is denoted by $\mA$. Note that
  \begin{equation*}
    \pi(\mA) = M_{H_{3m}} \setminus H_{3m}.
  \end{equation*}
For $\mM_{\mH_{3m},R}' $, we can find a similar boundary piece $\mA'$. Then $\partial \widehat{\mM}_{\mH_{3m},R}$ is the closed manifold glued together by $\mA$, $\mA'$ and $\partial \mH_{3m,R} \times [-1, 2]$.} Without loss of generality, we assume that $\widehat\mM_{\mH_{3m},R}$ is oriented.

\begin{figure}[htbp]
  \centering
  \includegraphics[scale=1]{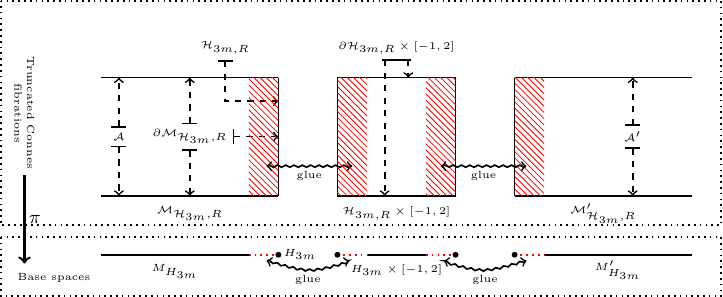}
  \caption{Gluing the truncated Connes fibration.}
  \label{fig:cf}
\end{figure}

{Figure~\ref{fig:cf} is a heuristic illustration about the different pieces of $\partial\mM_{\mH_{3m},R} $ and how to glue three truncated Connes fibrations together.}

{Let $g^{T\mH_{3m,R}}$ be the induced metric on $\mH_{3m,R}$ by equation (\ref{0.6}) with $\beta=\gamma=1$ and let ${\rm d}t^2$ be the standard metric on $[0,1]$.} By the construction of $\widehat{\mM}_{\mH_{3m},R}$, we can define a smooth metric $g^{T\widehat{\mM}_{\mH_{3m},R}}$ on $\widehat{\mM}_{\mH_{3m},R}$ in the following way:
\begin{equation}
  \label{eq:metric}
  \begin{aligned}[]
    g^{T\widehat{\mM}_{\mH_{3m},R}}|_{\mM_{\mH_{3m},R}} &= g^{T\mM_{3m,R}}_{\beta,\gamma},\\
    g^{T\widehat{\mM}_{\mH_{3m},R}}|_{\mM'_{\mH_{3m},R}} &= g^{T\mM'_{3m,R}},\\
    g^{T\widehat{\mM}_{\mH_{3m},R}}|_{\mH_{3m,R}\times [0,1]} &= g^{T\mH_{3m,R}}\oplus {\dd t^2},
  \end{aligned}
\end{equation}
and then paste these metrics together.\footnote{We would like to point out that on $\widehat{\mathcal{M}}_{\mathcal{H}_{3m},R}$, the metric on $\mathcal{H}_{3m,R}\times [-1,0]$ also depends on $\beta,\gamma$. However, since we don't use the property of the metric on this part for the rest of the paper, we don't write it down explicitly.}

Let $\partial \widehat \mM_{\mH_{3m},R}$ bound another oriented manifold $  \mN_{3m,R}$ so that 
\begin{equation*}
  \widetilde  \mN_{3m,R}=\widehat \mM_{\mH_{3m},R}\allowbreak\cup \mN_{3m,R}
\end{equation*}
is an oriented closed manifold. 
Let $g_{}^{T\widetilde \mN_{3m,R}}$ be a smooth metric on $T\widetilde \mN_{3m,R}$ so that
\begin{equation*}
  g_{}^{T\widetilde \mN_{3m,R}}|_{\widehat\mM_{\mH_{3m},R}}=g_{}^{T\widehat\mM_{\mH_{3m},R} }.
\end{equation*}
The existence of $g_{}^{T\widetilde \mN_{3m,R}}$ is clear.

We extend $f:M_{H_{3m}}\to S^n(1)$ to $f:\widehat{M}_{H_{3m}}\to S^n(1)$ by setting
\begin{equation*}
  f\left((H_{3m}\times [-1,2])\cup M'_{H_{3m}}\right)=z_0.
\end{equation*}
 Let $\widehat f_{3m,R}:\widehat \mM_{\mH_{3m},R}\rightarrow S^{n}(1)$ be the smooth map defined by
\begin{align*}\label{0.9} 
\widehat f_{3m,R} = f\circ  \pi\ {\rm on}\ \mM_{\mH_{3m},R}
\end{align*}
and $\widehat f_{3m,R} \bigl(\left(\mH_{3m,R}\times [-1,2]\right)\cup \mM_{\mH_{3m},R}'\bigr)=z_0$.

Let $S(TS^n(1))=S_+(TS^n(1))\oplus S_-(TS^n(1))$ be the spinor bundle of $S^n(1)$.
Following~\cite[(2.6)]{Z19-2}, we construct a suitable bundle endomorphism $V$ of $S(TS^n(1))$.
More precisely, by taking any regular value $\mathfrak{q}\in S^n(1)\setminus f(M\setminus K)$ of $f$, we choose $X$ to be a smooth vector field on $S^n(1)$ such that $|X| > 0$ on $S^n(1)\setminus \{\mathfrak{q}\}$.
Let
\begin{equation*}
  v= c(X): S_+(TS^n(1))\to S_-(TS^n(1))
\end{equation*}
be the Clifford action of $X$ and
\begin{equation*}
  v^*: S_-(TS^n(1))\to S_+(TS^n(1))
\end{equation*}
be the adjoint of $v$ with respect to the Hermitian metrics on $S_{\pm}(TS^n(1))$.
We define $V$ to be the self-adjoint odd endomorphism $$V=v+v^*:S(TS^n(1))\to S(TS^n(1)).$$
Then there exists $\delta>0$ such that 
\begin{align}\label{2.11a}
\left(\widehat{f}_{3m,R}^*V\right)^2\geq \delta\ {\rm on}\ \widehat{\mM}_{\mH_{3m},R}\setminus \pi^{-1}\big({\rm Supp}(\dd f)\big).
\end{align}
Let
\begin{equation*}
  \left(\mE_{3m,R,\pm},g^{\mE_{3m,R,\pm}},\nabla^{\mE_{3m,R,\pm}}\right)= \widehat f_{3m,R}^*\left (S_{\pm}(TS^n(1)),g^{S_{\pm}(TS^n(1))},\nabla^{S_{\pm}(TS^n(1))}\right)
\end{equation*}
be the induced Hermitian vector bundle with the Hermitian connection on $\widehat \mM_{\mH_{3m},R}$. Then $\mE_{3m,R}=\mE_{3m,R,+}\oplus \mE_{3m,R,-}$ is a ${\mathbb{Z}}_2$-graded Hermitian vector bundle over $\widehat \mM_{\mH_{3m},R}$.

\subsection{Adiabatic limits and deformed sub-Dirac operators on
$\widehat \mM_{\mH_{3m},R}$}\label{s1.3}
Recall that we have assumed that $TM$ is oriented and spin. Thus $\mF\oplus\mF^\perp_{1} =\pi^*(TM)$ is spin. Without loss of generality, as in~\cite[p. 1062-1063]{Z17}, we can assume further that $F$ is oriented and ${\rm rk}(F^\perp)$ is divisible by $4$. Then $F^\perp$  is also oriented and $\dim \mM$ is even.

It is clear that $\mF\oplus \mF^\perp_{1},\, \mF^\perp_{2}$  over $\mM_{\mH_{3m},R}$ can be extended to
\begin{equation*}
  \big(\mH_{3m,R}\times [-1,2]\big)\allowbreak\cup \mM_{\mH_{3m},R}'
\end{equation*}
such that we have the orthogonal splitting\footnote{$\mF$ restricted to $(\mH_{3m,R}\times [-1,2])\cup \mM_{\mH_{3m},R}'$ needs no longer to be integrable.}
\begin{align*}\label{0.10}
T\widehat \mM_{\mH_{3m},R}= \left(\mF\oplus\mF^\perp_{1} \right)\oplus \mF^\perp_{2}\ \  {\rm on}\ \ \widehat \mM_{\mH_{3m},R}.
\end{align*}

Let $S_{\beta,\gamma}\left(\mF\oplus \mF^\perp_{1}\right)$ denote the spinor bundle over $\widehat \mM_{\mH_{3m},R}$ with respect to the metric  $g^{T\widehat\mM_{\mH_{3m},R}}|_{\mF \oplus\mF^\perp_{1}} $ (thus with respect to
$\beta^2g^{\mF}\oplus \frac{g^{\mF^\perp_{1}}}{\gamma^2}$ on $\mM_{\mH_{3m},R}$). Let $\Lambda^* \left(\mF_{2}^\perp\right )$ denote the exterior algebra bundle of $\mF_{2}^{\perp,*}$, with the   ${\mathbb{Z}}_2$-grading given by the natural even/odd parity.

Let
\begin{equation*}
  D _{\mF\oplus\mF_{1}^\perp,\beta,\gamma}:\Gamma\left (S_{\beta,\gamma} \left(\mF\oplus\mF_{1}^\perp\right)\widehat\otimes \Lambda^* \left(\mF_{2}^\perp \right)\right )
  \rightarrow \Gamma \left(S_{\beta,\gamma} \left(\mF\oplus\mF_{1}^\perp\right)\widehat \otimes \Lambda^*\left (\mF_{2}^\perp \right) \right)
\end{equation*}
be the sub-Dirac operator on $\widehat \mM_{\mH_{3m},R}$ constructed as in \cite[(2.16)]{Z17}.  
It is clear that one can define canonically the twisted sub-Dirac operator (twisted by $\mE_{3m,R}$) on $\widehat \mM_{\mH_{3m},R}$,
\begin{multline}\label{0.11}
D ^{\mE_{3m,R}}_{\mF\oplus\mF_{1}^\perp,\beta,\gamma}:\Gamma  \left(S_{\beta,\gamma}  \left(\mF\oplus\mF_{1}^\perp\right)\widehat\otimes
\Lambda^*\left (\mF_{2}^\perp\right )\widehat \otimes \mE_{3m,R}\right)
\\
\rightarrow
\Gamma \left (S_{\beta,\gamma}  \left(\mF\oplus\mF_{1}^\perp\right)\widehat\otimes
\Lambda^*\left(\mF_{2}^\perp\right )\widehat \otimes \mE_{3m,R}\right ).
\end{multline}

Let $\widetilde{f}:[0,1]\rightarrow [0,1]$ be a smooth function such that  $\widetilde{f}(t)= 0$ for $0\leq t\leq \frac{1}{4}$, while $\widetilde{f}(t) =1$ for $   \frac{1}{2}\leq t\leq 1$.  Let $h:[0,1]\rightarrow [0,1]$ be a smooth function such that $h(t)=1$ for $0\leq t\leq \frac{3}{4}$, while $h(t)=0$ for $\frac{7}{8}\leq t\leq 1$.

For any $p\in \mM_{\mH_{3m},R}$, we connect $p$ and $s(\pi(p))$ by the unique geodesic in $\mM_{ \pi(p)}$. Let $\sigma(p)\in \mF_{2}^\perp|_p$ denote the unit vector tangent to this geodesic. Then  
\begin{align}\label{0.13}
 \widetilde \sigma =\widetilde{ f}\left(\frac{\rho}{R}\right)\sigma
\end{align}
is a smooth section of $\mF_{2}^\perp|_{\mM_{\mH_{3m},R}}$. It extends to a smooth section of $\mF_{2}^\perp|_{\widehat\mM_{\mH_{3m},R}}$, which we still denote by $\widetilde\sigma$.  It is easy to see that we may and we will assume that $\widetilde\sigma$ is transversal to (and thus nowhere zero on) $\partial \widehat\mM_{\mH_{3m},R}$.

The Clifford action $\widehat c(\widetilde\sigma)$ (cf. \cite[(1.47)]{Z17}) now acts on
\begin{equation*}
  S_{\beta,\gamma}   \left(\mF\oplus\mF_{1}^\perp \right)\widehat\otimes
  \Lambda^* \left(\mF_{2}^\perp \right )\allowbreak\widehat{\otimes} \mE_{3m,R}
\end{equation*}
over $\widehat\mM_{\mH_{3m},R}$.

For $\varepsilon>0$, we introduce the following deformation of  $D ^{\mE_{3m,R}}_{\mF\oplus\mF_{1}^\perp,\beta,\gamma}$ on 
$\widehat\mM_{\mH_{3m},R}$ which put the deformations in \cite[(2.21)]{Z17} and \cite[(1.11)]{Z19} together,
\begin{align}\label{0.15}
 D ^{\mE_{3m,R}}_{\mF\oplus\mF_{1}^\perp,\beta,\gamma}
+\frac{\widehat c(\widetilde\sigma)}{\beta}
+\frac{\varepsilon \widehat{f}_{3m,R}^*V}{\beta}.
\end{align}

For this deformed sub-Dirac operator, we have the following analog of
\cite[Lemma 2.4]{Z17}.

\begin{lemma}\label{t0.4} {There exist $c_0>0$, $\varepsilon>0$, $m>0$ and $R>0$ such that when $\beta,\,\gamma>0$ are small enough (which may depend on $m$ and $R$), }

\emph{(i)} for any section $s\in \Gamma  \left(S_{\beta,\gamma}  \left (\mF\oplus\mF_{1}^\perp \right)\widehat\otimes
\Lambda^* \left(\mF_{2}^\perp\right )\widehat \otimes \mE_{3m,R} \right )$ supported in the interior of $\widehat\mM_{\mH_{3m},R}$, one has\footnote{The norms below   depend on $\beta$ and $\gamma$. In case of no confusion, we omit the subscripts for simplicity.}
\begin{align}\label{0.16}
 \bigg\|\bigg(D ^{\mE_{3m,R}}_{\mF\oplus\mF_{1}^\perp,\beta,\gamma}
+\frac{\widehat c(\widetilde\sigma)}{\beta}
+\frac{\varepsilon \widehat{f}_{3m,R}^*V}{\beta}\bigg)s\bigg\|\geq \frac{c_0 }{\beta}\|s\|;
\end{align}

\emph{(ii)} for any section $s\in \Gamma  \left(S_{\beta,\gamma}  \left (\mF\oplus\mF_{1}^\perp \right)\widehat\otimes
\Lambda^* \left(\mF_{2}^\perp \right)\widehat \otimes \mE_{3m,R} \right )$
supported in the interior of $\mM_{\mH_{3m},R}\setminus  \mM_{\mH_{3m},\frac{R}{2}}$, one has
\begin{align}\label{0.17}
 \bigg\|\bigg(h\left(\frac{\rho}{R}\right) D ^{\mE_{3m,R}}_{\mF\oplus\mF_{1}^\perp,\beta,\gamma}
 h\left(\frac{\rho}{R}\right) 
+\frac{\widehat c\left(\widetilde\sigma\right)}{\beta}
+\frac{\varepsilon \widehat{f}^*_{3m,R}V}{\beta}\bigg)s\bigg\|\geq \frac{c_0 }{\beta}\|s\|.
\end{align}
\end{lemma}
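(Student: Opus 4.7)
The plan is to bound the deformed operator $\tilde D$ of (\ref{0.15}) from below by computing $\tilde D^2$ via a Bochner--Lichnerowicz formula and then splitting $\widehat{\mM}_{\mH_{3m},R}$ into regions on which distinct positive terms dominate. The key algebraic observation is that $\widehat c(\widetilde\sigma)$, which acts on the $\Lambda^*(\mF_2^\perp)$-factor in (\ref{0.11}), and $\widehat f_{3m,R}^*V$, which acts on the $\mE_{3m,R}$-factor, are both odd with respect to the $\mathbb{Z}_2$-grading of the graded tensor product; hence they graded-anti-commute, and the cross term between them in $\tilde D^2$ vanishes. Combined with the Bochner expansion of $(D^{\mE_{3m,R}}_{\mF\oplus\mF_1^\perp,\beta,\gamma})^2$ from \cite{Z17} (whose leading positive contribution is $k^F/(4\beta^2)$, and whose $\mE_{3m,R}$-curvature contribution is pointwise bounded in operator norm by ${\rm rk}(F)({\rm rk}(F)-1)/(4\beta^2)$ via the Llarull-type estimate for maps area-decreasing along $F$, as in \cite{S}), this yields pointwise
\[
\tilde D^2 \geq \nabla^*\nabla + \frac{k^F - {\rm rk}(F)({\rm rk}(F)-1)\chi_{\pi^{-1}(\supp(\dd f))}}{4\beta^2} + \frac{|\widetilde\sigma|^2 + \varepsilon^2(\widehat f^*_{3m,R}V)^2}{\beta^2} + \text{l.o.t.}(\beta,\gamma).
\]

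For part~(i), fix $m$ such that $\supp(\dd f)\subseteq B_m$, and then $R>4m$, so that $\pi^{-1}(\supp(\dd f))\subset\pi^{-1}(B_m)\subset \mM_{\mH_{3m},R/4}$ and hence $\widetilde\sigma=0$ on $\pi^{-1}(\supp(\dd f))$. On this set, the strict inequality (\ref{1.2}) gives $(k^F-{\rm rk}(F)({\rm rk}(F)-1))/(4\beta^2)\geq c_1/\beta^2$ for some $c_1>0$, which dominates the lower-order terms once $\beta,\gamma$ are sufficiently small. Off $\pi^{-1}(\supp(\dd f))$, the bundle $\mE_{3m,R}$ is flat (since $f$ is locally constant there), so the negative curvature term vanishes; combined with (\ref{2.11a}) and the contradiction hypothesis $k^F\geq 0$ from Subsection~\ref{s1.1}, one obtains $\tilde D^2\geq \varepsilon^2\delta/(2\beta^2)$. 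Setting $c_0^2=\tfrac{1}{4}\min(c_1,\varepsilon^2\delta)$ and integrating against $s$ gives (\ref{0.16}).

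For part~(ii), sections are supported in the annulus $\{R/2\leq\rho\leq R\}$, which for $R>4m$ lies entirely in $\pi^{-1}(M\setminus K)$ (where $\mE_{3m,R}$ is flat), and on which $\widetilde\sigma=\sigma$ has unit length, so $|\widetilde\sigma|^2/\beta^2=1/\beta^2$. The cut-off $h(\rho/R)$ only affects the sub-Dirac piece and introduces additional commutator terms of size $O(1/R)$ supported where $h'(\rho/R)\neq 0$; since the endomorphism pieces $\widehat c(\widetilde\sigma)/\beta$ and $\varepsilon\widehat f^*V/\beta$ are untouched, the off-$\supp(\dd f)$ argument from part~(i) applies verbatim, and the dominant term $1/\beta^2$ from $|\widetilde\sigma|^2$ absorbs all $O(1/R)$ commutators once $R$ is large, yielding (\ref{0.17}).

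The main obstacle is the nested choice of parameters $\varepsilon,m,R,\beta,\gamma$: $\varepsilon$ must be fixed small enough for the Llarull-type curvature comparison to close with strict room to spare; $m$ is then chosen so that $\supp(\dd f)\subseteq B_m$; $R>4m$ is taken large enough that the cut-off annuli avoid $\pi^{-1}(\supp(\dd f))$ and the $O(1/R)$ commutators from $h$ and $\widetilde f$ become negligible; and finally $\beta,\gamma$ are taken small (depending on $m,R$) to absorb the remaining first-order commutator terms $\{D,\widehat c(\widetilde\sigma)\}/\beta$ and $\varepsilon\{D,\widehat f^*V\}/\beta$ into $\nabla^*\nabla\geq 0$ via the adiabatic-limit technique of \cite{Z17}. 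Completeness of $(M,g^{TM})$ enters only through the construction of the regularized distance function $d$ in (\ref{eq:fun-d}), on which the cut-offs $\widetilde f(\rho/R)$ and $h(\rho/R)$ depend.
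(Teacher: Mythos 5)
Your algebraic observation is correct: $\widehat c(\widetilde\sigma)$ (odd on $\Lambda^*(\mF_2^\perp)$) and $\widehat f^*_{3m,R}V$ (odd on $\mE_{3m,R}$) graded-anticommute, so their cross term drops out of $\tilde D^2$; this is exactly what makes (\ref{721}) true. But the rest of the argument rests on a geometric misconception that is fatal as written. You repeatedly conflate the two independent parameters of the construction: $m$ controls the \emph{base} scale (via the regularized distance $d(x)$ on $M$, cutting out $B_m\subset M$), while $R$ controls the \emph{fiber} scale (via $\rho(p)$, the fiberwise distance in the Connes fibration $\pi:\mM\to M$, cutting out $\mM_R$). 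These live in orthogonal directions: $\pi^{-1}(B_m)$ contains the entire noncompact Connes fiber over each point of $B_m$, so it is \emph{never} contained in $\mM_{\mH_{3m},R/4}$ no matter how $R$ and $m$ are related. Hence your claim ``$\widetilde\sigma=0$ on $\pi^{-1}(\supp(\dd f))$'' is false, and so is the claim for part~(ii) that the annulus $\{R/2\le\rho\le R\}$ lies over $M\setminus K$. That annulus projects onto all of $M_{H_{3m}}$, including $\supp(\dd f)$, so the curvature terms do \emph{not} vanish there and the ``off-$\supp(\dd f)$ argument applies verbatim'' step collapses. What actually saves part~(ii) in the paper is the uniform lower bound $|\widetilde\sigma|^2/\beta^2\ge 1/\beta^2$ on the support of $s$ (since $\rho\ge R/2$ forces $\widetilde f(\rho/R)=1$), which dominates the Llarull-type negative curvature term over $\pi^{-1}(\supp(\dd f))$ once $R$ is large; this is a different mechanism than the one you describe.

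The other missing ingredient is the cut-off pair $\varphi_{m,1},\varphi_{m,2}$ (lifts of Bismut--Lebeau-normalized functions built from the Gromov--Lawson regularized distance $d$), which the paper highlights as the new contribution of this lemma. These functions depend on the base coordinate $d(x)$, not on $\rho$; they are used to split $\|\tilde D s\|^2=\|\varphi_{m,1}\tilde D s\|^2+\|\varphi_{m,2}\tilde D s\|^2$, move the cut-offs through $\tilde D$ with error $O(1/(\beta m))+O_{m,R}(\gamma)$ (see (\ref{2.19})), and then apply the scaled Bochner formula (\ref{2.24}) only to $\varphi_{m,1}s$ (supported in $\pi^{-1}(B_{2m})\cap\mM_{\mH_{3m},R}$, where that formula is valid), while $\varphi_{m,2}s$ is handled by the flat/invertible-$V$ estimate over the glued pieces and over $\pi^{-1}(M\setminus B_m)$. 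Completeness of $(M,g^{TM})$ enters via the construction of $d$ and thus of $\varphi_{m,i}$, not via $\widetilde f(\rho/R)$ or $h(\rho/R)$ (which are functions of the fiber variable $\rho$ alone and have nothing to do with $d$). Your proposal omits $\varphi_{m,1},\varphi_{m,2}$ entirely, so it does not recover the partition the proof actually turns on, and the parameter hierarchy you describe for absorbing the commutator $[D,\widehat c(\widetilde\sigma)/\beta]$ -- which is genuinely present on $\pi^{-1}(\supp(\dd f))$ and must be controlled by taking $R$ large, per (\ref{2.27}) -- is justified by a false premise rather than by that estimate.
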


\begin{proof}

  Following \cite[Theorem 1.17]{GL83}, let $\phi:[0, \infty) \rightarrow [0,1]$ be a smooth function such that $\phi \equiv 1$ on $[0, 1]$, $\phi \equiv 0$ on $[2, \infty)$ and $\phi' \approx -1$ on $[1, 2]$. We define a smooth function {${\psi_m}:M_{H_{3m}}\to [0,1]$ by
  \begin{equation*}
    \psi_m(x) = \phi(d(x)/m),
  \end{equation*}
  where $m\in \mathbb{N}$. We extend $\psi_m$ to $(H_{3m}\times [-1,2])\cup M'_{H_{3m}}$ by setting
  \begin{equation*}
    \psi_m\big((H_{3m}\times [-1,2])\cup M'_{H_{3m}}\big)=0.
  \end{equation*}

Following~\cite[p. 115]{BL91}, let $\psi_{m,1},\, \psi_{m,2}: \widehat{M}_{H_{3m}}\rightarrow [0,1]$ be defined by
\begin{align}\label{0.20}
  \psi_{m,1} =\frac{\psi_{m}}{\big(\psi_{m}^2+(1-\psi_{m})^2\big)^{\frac{1}{2}}},\; \psi_{m,2} =\frac{1-\psi_{m}}{\big(\psi_{m}^2+(1-\psi_{m})^2\big)^{\frac{1}{2}}}.
\end{align}
Using the above defintion and (\ref{eq:fun-d}), for $i=1,2$, we have
\begin{equation}
  \label{eq:est-psi}
  |\nabla \psi_{m,i}|(x)\leq {C/m} \text{ for any }x\in \widehat{M}_{H_{3m}},
\end{equation}
where $C$ is a constant independent of $g^{TM}$.

We lift $\psi_m,\psi_{m,1},\psi_{m,2}$ to $\widehat \mM_{\mH_{3m},R}$ and denote them by $\varphi_m,\varphi_{m,1},\varphi_{m,2}$ respectively.} By definition, we have following properties about $\varphi_{m,1}$ and $\varphi_{m,2}$:
\begin{equation}
  \label{0.22}
  \begin{gathered}[]
    \varphi_{m,1}=1 \text{ if } x\in \pi^{-1}(B_m), \quad \varphi_{m,1}=0 \text{ if } x\in \widehat{\mM}_{\mH_{3m},R}\setminus \pi^{-1}(B_{2m});\\
    \varphi_{m,2}=0 \text{ if } x\in \pi^{-1}(B_m), \quad \varphi_{m,2}=1 \text{ if } x\in \widehat{\mM}_{\mH_{3m},R}\setminus \pi^{-1}(B_{2m}). \\
  \end{gathered}
\end{equation}

We first show the part (i) of Lemma \ref{t0.4}, {i.e.,}\ (\ref{0.16}).

For any $s\in \Gamma  \left(S_{\beta,\gamma}  \left (\mF\oplus\mF_{1}^\perp\right )\widehat\otimes
\Lambda^*\left (\mF_{2}^\perp \right)\widehat \otimes \mE_{3m,R} \right )$ supported in the interior of $\widehat\mM_{\mH_{3m},R}$, by (\ref{0.20}), one has
\begin{multline*}\label{0.21}
 \Big\|\Big(D ^{\mE_{3m,R}}_{\mF\oplus\mF_{1}^\perp,\beta,\gamma}+\frac{\widehat c(\widetilde\sigma)}{\beta} + \frac{\varepsilon \widehat{f}^*_{3m,R}V}{\beta}\Big)s\Big\|^2  = \Big\|\varphi_{m,1}\Big(D ^{\mE_{3m,R}}_{\mF\oplus\mF_{1}^\perp,\beta,\gamma}+\frac{\widehat c(\widetilde\sigma)}{\beta} + \frac{\varepsilon \widehat{f}^*_{3m,R}V}{\beta}\Big)s \Big\|^2 \\
+ \Big\| \varphi_{m,2}\Big(D ^{\mE_{3m,R}}_{\mF\oplus\mF_{1}^\perp,\beta,\gamma}+\frac{\widehat c(\widetilde\sigma)}{\beta} + \frac{\varepsilon \widehat{f}^*_{3m,R}V}{\beta}\Big)s\Big\|^2 ,
\end{multline*}
from which one gets, 
\begin{equation}\label{0.22a}
  \begin{aligned}[]
    & \sqrt{2}\Big\|\Big(D ^{\mE_{3m,R}}_{\mF\oplus\mF_{1}^\perp,\beta,\gamma}+\frac{\widehat c(\widetilde\sigma)}{\beta} + \frac{\varepsilon \widehat{f}^*_{3m,R}V}{\beta}\Big)s\Big\| \\
    & \qquad \geq
    \begin{multlined}[t][0.8\textwidth]
      \Big\|\varphi_{m,1}\Big(D ^{\mE_{3m,R}}_{\mF\oplus\mF_{1}^\perp,\beta,\gamma}+\frac{\widehat c(\widetilde\sigma)}{\beta} + \frac{\varepsilon \widehat{f}^*_{3m,R}V}{\beta}\Big)s\Big\|\\
      + \Big\| \varphi_{m,2}\Big(D ^{\mE_{3m,R}}_{\mF\oplus\mF_{1}^\perp,\beta,\gamma} + \frac{\widehat c(\widetilde\sigma)}{\beta} + \frac{\varepsilon \widehat{f}^*_{3m,R}V}{\beta}\Big)s\Big\|
    \end{multlined}\\
& \qquad \geq
    \begin{multlined}[t][0.8\textwidth]
      \Big\|\Big(D ^{\mE_{3m,R}}_{\mF\oplus\mF_{1}^\perp,\beta,\gamma}+\frac{\widehat c(\widetilde\sigma)}{\beta} + \frac{\varepsilon \widehat{f}^*_{3m,R}V}{\beta}\Big)(\varphi_{m,1}s)\Big\|\\
      + \Big\|\Big (D ^{\mE_{3m,R}}_{\mF\oplus\mF_{1}^\perp,\beta,\gamma}+\frac{\widehat c(\widetilde\sigma)}{\beta} + \frac{\varepsilon \widehat{f}^*_{3m,R}V}{\beta}\Big)\left(\varphi_{m,2}s\right)\Big\|\\
    -\Big\|c_{\beta,\gamma}\left({\rm d}\varphi_{m,1}\right)s\Big\|
    -\Big\|c_{\beta,\gamma}\left({\rm d}\varphi_{m,2}\right)s\Big\|,
    \end{multlined}
\end{aligned}
\end{equation}
where for each $i\in \{1,2\}$, we identify ${\dd \varphi_{m,i}}$ with the gradient of $\varphi_{m,i}$ and $c_{\beta,\gamma}(\cdot)$ means with respect to the metric (\ref{0.6}).

  We are going to estimate the r.h.s.\ of (\ref{0.22a}) term by term.
  We begin with a pointwise estimate of $c_{\beta,\gamma}(\dd \varphi_{m,i})s$, $i=1,2$. By (\ref{0.22}), we only need to do it on $\mM_{\mH_{3m},R}$.

  Here, as well as at several places in the following, we need to choose a local orthonormal frame for $T\mM_{\mH_{3m},R}$.
  Hence, we explain here our choice of this frame once and for all.
  Let $\mathrm{rk}(F)=\mathrm{rk}(\mathcal{F})=q$, $\mathrm{rk}(\mathcal{F}^{\perp}_1)= q_1$ and $\mathrm{rk}({\mathcal{F}^{\perp}_2}) = q_2$.
  Since on $\mM_{\mH_{3m},R}$, $g^{\mF} = \pi^* g^F$, for a local orthonormal basis $\{f_1,\dots,f_q\}$ of $(\mF, g^{\mF})$, we can choose it to be lifted from a local orthonormal basis of $(F, g^F)$.
  Moreover, we choose $h_1, \dots, h_{q_1}$ (resp.\ $e_1, \dots, e_{q_2}$) to be a local orthonormal basis of $(\mF^{\perp}_1, g^{\mF^{\perp}_1})$ (resp.\ $(\mF^{\perp}_2, g^{\mF^{\perp}_2})$).
  Then,
\begin{equation}
  \label{eq:loc-fr}
  \{f_1,\dots,f_q,h_1,\dots,h_{q_1},e_1,\dots,e_{q_2}\}
\end{equation}
is a local orthonormal frame for $T\mM_{\mH_{3m},R}$.

Back to the estimate of $c_{\beta,\gamma}(\dd \varphi_{m,i})s$, $i=1,2$.
Using the local frame (\ref{eq:loc-fr}), by (\ref{eq:est-psi}) and the fact that
\begin{equation*}
  \varphi_{m,i} = \psi_{m,i}\circ \pi,\quad i=1,2,
\end{equation*}
we have for any $1\leq k\leq q$ that 
\begin{equation*}
  |f_k(\varphi_{m,i}) |(x) = O\left({1\over {m}}\right), \text{ for any }x\in \mM_{\mH_{3m},R}
\end{equation*}
and for any $1\leq j\leq q_2$ that
\begin{align*}
e_j (\varphi_{m,i})=0.
\end{align*}
Therefore, by the properties of the Clifford action, for any $x\in\mM_{\mH_{3m},R}$, we have
\begin{multline}\label{2.19}
  |c_{\beta,\gamma}(\dd \varphi_{m,i})s|(x)
  \le \sum_{k=1}^q \Big(|\beta^{-1}f_k(\varphi_{m,i})|\cdot|c_{\beta,\gamma}(\beta^{-1}f_k)s|\Big)(x) \\
  + \sum_{l=1}^{q_1} \Big(|\gamma h_l(\varphi_{m,i})|\cdot|c_{\beta,\gamma}(\gamma h_l)s|\Big)(x)
  =\left(O\left({1\over {\beta m}}\right)+O_{m,R}(\gamma)\right) |s|(x),
\end{multline}
where the subscripts in $O_{m,R}(\cdot)$ mean that the estimating constant may depend on $m$ and $R$.

For the first two terms on the r.h.s.\ of (\ref{0.22a}),
by a direct computation, we have
\begin{multline}\label{721}
  \Big(D ^{\mE_{3m,R}}_{\mF\oplus\mF_{1}^\perp,\beta,\gamma}+\frac{\widehat c(\widetilde\sigma)}{\beta}
  +\frac{\varepsilon \widehat{f}^*_{3m,R}V}{\beta}\Big)^2=\Big(D ^{\mE_{3m,R}}_{\mF\oplus\mF_{1}^\perp,\beta,\gamma}+\frac{\widehat c(\widetilde\sigma)}{\beta}   \Big)^2 \\
  +\Big[D ^{\mE_{3m,R}}_{\mF\oplus\mF_{1}^\perp,\beta,\gamma},{{\varepsilon \widehat{f}^*_{3m,R}V}\over{\beta}}\Big]
  +\frac{\varepsilon^2 (\widehat{f}^*_{3m,R}V)^2}{\beta^2}.
\end{multline} 
Of the three terms on the r.h.s.\ of the above
  equality, we can control the last term relatively easily.

  Now we will deal with the
  second term on the r.h.s.\ of the above equality. By~\cite[(2.17)]{Z17}, using the local frame (\ref{eq:loc-fr}), one has
\begin{multline}
  \label{eq:d-v}
  \Big[D ^{\mE_{3m,R}}_{\mF\oplus\mF_{1}^\perp,\beta,\gamma},{{\varepsilon \widehat{f}^*_{3m,R}V}\over{\beta}}\Big] = \sum^{q}_{i=1} \beta^{-1}c_{\beta,\gamma}(\beta^{-1}f_i)\Big[\nabla^{\mE_{3m,R}}_{f_i}, {{\varepsilon \widehat{f}^*_{3m,R}V}\over{\beta}}\Big] \\
  + \sum^{q_1}_{s=1} \gamma c_{\beta,\gamma}(\gamma h_s) \Big[\nabla^{\mE_{3m,R}}_{h_s}, {{\varepsilon \widehat{f}^*_{3m,R}V}\over{\beta}}\Big]
  + \sum^{q_2}_{j=1} c_{\beta,\gamma}(e_j)\Big[\nabla^{\mE_{3m,R}}_{e_j}, {{\varepsilon \widehat{f}^*_{3m,R}V}\over{\beta}}\Big].
\end{multline}

Since $\nabla^{\mE_{3m,R}}$ (resp.\ $\widehat{f}^*_{3m,R}V$) is a  pull-back connection (resp.\ bundle endomorphism) via $\pi$, we have
\begin{equation}
  \label{eq:n-v}
  \begin{gathered}[]
\left [\nabla^{\mE_{3m,R}}_{e_j}, {{\widehat{f}^*_{3m,R}V}}\right] = 0, \\
   \left [\nabla^{\mE_{3m,R}}_{f_i}, {{\widehat{f}^*_{3m,R}V}}\right]  = O(1),\\
   \left [\nabla^{\mE_{3m,R}}_{h_s}, {{\widehat{f}^*_{3m,R}V}}\right] = O_R(1).
  \end{gathered}
\end{equation}
Putting (\ref{eq:d-v}) and (\ref{eq:n-v}) together, one has
\begin{align}\label{2.26a}
\Big[D ^{\mE_{3m,R}}_{\mF\oplus\mF_{1}^\perp,\beta,\gamma},{{\varepsilon \widehat{f}^*_{3m,R}V}\over{\beta}}\Big]=O\left({\varepsilon\over \beta^2}\right)+O_{R}\left({\varepsilon\gamma\over \beta}\right)\ {\rm on}\ \pi^{-1}({\rm Supp}(\dd f)).
\end{align}
Meanwhile, since {$f^*V$} is a constant endomorphism outside the support of $\dd f$, we know
\begin{align}\label{2.27a}
\Big[D ^{\mE_{3m,R}}_{\mF\oplus\mF_{1}^\perp,\beta,\gamma},{{\varepsilon \widehat{f}^*_{3m,R}V}\over{\beta}}\Big]= 0\ {\rm on}\ \widehat{\mM}_{\mH_{3m},R}\setminus {\pi^{-1}({\rm Supp}(\dd f))}.
\end{align}

The first term on the r.h.s.\ of (\ref{721}) is nonnegative. But for our purpose, such an estimate is not enough. We need analyze it more precisely, especially on $\mM_{\mH_{3m},R}$. In fact, using the local frame (\ref{eq:loc-fr}), by \cite[(2.24) and (2.28)]{Z17} (see also \cite[(1.13)]{SZ}), on $\mM_{\mH_{3m},R}$, we have
\begin{multline}\label{2.24}
 \Big (D ^{\mE_{3m,R}}_{\mF\oplus\mF_{1}^\perp,\beta,\gamma}+\frac{\widehat c(\widetilde\sigma)}{\beta}\Big)^2=\Big(D ^{\mE_{3m,R}}_{\mF\oplus\mF_{1}^\perp,\beta,\gamma}\Big)^2 +\Big [D ^{\mE_{3m,R}}_{\mF\oplus\mF_{1}^\perp,\beta,\gamma},\frac{\widehat c(\widetilde\sigma)}{\beta}\Big] + {{|\widetilde{\sigma}|^2}\over\beta^2}\\
  =-\Delta^{\mE_{3m,R},\beta,\gamma}+{{k^\mF}\over{4\beta^2}}  + \sum_{i,j=1}^{q}{1\over{2\beta^2}} R^{\mE_{3m,R}}(f_i,f_j)c_{\beta,\gamma}(\beta^{-1}f_i)c_{\beta,\gamma}(\beta^{-1}f_j)\\
  + \Big[D ^{\mE_{3m,R}}_{\mF\oplus\mF_{1}^\perp,\beta,\gamma},\frac{\widehat c(\widetilde\sigma)}{\beta}\Big]
+{{|\widetilde{\sigma}|^2}\over\beta^2}+O_{m,R}\left({1\over \beta}+{\gamma^2\over \beta^2}\right),
\end{multline}
where $-\Delta^{\mE_{3m,R},\beta,\gamma}\geq 0$ is the corresponding Bochner Laplacian, $k^\mF=\pi^* (k^F)$ and
\begin{equation*}
  R^{\mE_{3m,R}}=(\nabla^{\mE_{3m,R,+}})^2+(\nabla^{\mE_{3m,R,-}})^2.
\end{equation*}

The benefits of (\ref{2.24}) is that each term on the r.h.s.\ of it can be controlled in a certain way. More concretely,
since $f$ is area decreasing along $F$, by \cite[(2.6)]{S} which goes back to \cite{LL}, we have
\begin{multline}\label{2.25}
  \Big({1\over{2\beta^2}}\sum_{i,j=1}^{q}R^{\mE_{3m,R}}(f_i,f_j)c_{\beta,\gamma}(\beta^{-1}f_i)c_{\beta,\gamma}(\beta^{-1}f_j)s,s\Big)_{\pi^{-1}({\rm Supp}(\dd f))}\\
  \geq {-{{q(q-1)}\over{4\beta^2}}}\|s\|^2_{\pi^{-1}({\rm Supp}(\dd f))},
\end{multline}
where $(\cdot\ ,\ \cdot )_{\pi^{-1}({\rm Supp}(\dd f))}$ and $\|\cdot \|_{\pi^{-1}({\rm Supp}(\dd f))}$ mean the integration on ${\pi^{-1}({\rm Supp}(\dd f))}$.

By (\ref{1.2}), there exists $\kappa>0$ such that 
\begin{align}\label{g2.24}
k^\mF-q(q-1)\geq \kappa\ {\rm on}\ \pi^{-1}({\rm Supp}(\dd f)).
\end{align}
{And }by \cite[Lemma 2.1]{Z17}, on $\mM_{\mH_{3m},R}\setminus s(M_{H_{3m}})$, we have
\begin{align}\label{2.27}
\left[D ^{\mE_{3m,R}}_{\mF\oplus\mF_{1}^\perp,\beta,\gamma},\frac{\widehat c(\widetilde\sigma)}{\beta}\right]=O_m\left({1\over \beta^2 R}\right)+O_{m,R}\left(1\over \beta\right).
\end{align}

{Now we can estimate the first term on r.h.s.\ of (\ref{0.22a}). As a first step,}
using (\ref{721}) and (\ref{2.24}), since the terms involving the Bochner Laplacian and $|\widetilde{\sigma}|$ are nonnegative, we have
  \begin{multline}\label{2.35j}
   \Big \|\Big(D ^{\mE_{3m,R}}_{\mF\oplus\mF_{1}^\perp,\beta,\gamma}+\frac{\widehat c(\widetilde\sigma)}{\beta}
    +\frac{\varepsilon \widehat{f}^*_{3m,R}V}{\beta}\Big)(\varphi_{m,1}s)\Big\|^2 \ge \Big({{k^\mF}\over{4\beta^2}}\varphi_{m,1}s, \varphi_{m,1}s\Big) \\ +\Big(\sum_{i,j=1}^{q}{1\over{2\beta^2}}R^{\mE_{3m,R}}(f_i,f_j)c_{\beta,\gamma}(\beta^{-1}f_i)c_{\beta,\gamma}(\beta^{-1}f_j)\varphi_{m,1}s, \varphi_{m,1}s\Big) \\
    +\Big(\Big[D ^{\mE_{3m,R}}_{\mF\oplus\mF_{1}^\perp,\beta,\gamma},\frac{\widehat c(\widetilde\sigma)}{\beta}\Big]\varphi_{m,1}s, \varphi_{m,1}s\Big)
    +\Big(\Big[D ^{\mE_{3m,R}}_{\mF\oplus\mF_{1}^\perp,\beta,\gamma},{{\varepsilon \widehat{f}^*_{3m,R}V}\over{\beta}}\Big]\varphi_{m,1}s,\varphi_{m,1}s\Big) \\
+{\Big( \frac{\varepsilon^2 (\widehat{f}^*_{3m,R}V)^2}{\beta^2}\varphi_{m,1}s,\varphi_{m,1}s\Big)}+\Big(O_{m,R}\left({1\over \beta}+{\gamma^2\over \beta^2}\right)\varphi_{m,1}s, \varphi_{m,1}s\Big) = \mathrm{I} + \mathrm{II},
  \end{multline}
  where we need to explain the meaning of symbols $\mathrm{I},\mathrm{II}$ appearing on the rightmost.
  Note that every term on the r.h.s.\ of the above inequality can be written as the sum of two parts: the integral on $\pi^{-1}(\supp(\dd f))$ and the integral on $\pi^{-1}(B_{2m}\setminus \supp(\dd f))$. We denote the sum of all the integrals on $\pi^{-1}(\supp(\dd f))$ (resp.\ $\pi^{-1}(B_{2m}\setminus \supp(\dd f))$) by the symbol $\mathrm{I}$ (resp.\ $\mathrm{II}$).

By (\ref{eq:df-b}) and (\ref{0.22}), we have $\varphi_{m,1}s = s$ on $\pi^{-1}(\supp(\dd f))$.
Therefore, by (\ref{2.26a}), (\ref{2.25}), (\ref{g2.24}), (\ref{2.27}) and proceeding as in \cite[p. 1058-1059]{Z17}, one has
\begin{equation}\label{2.31a}
\begin{aligned}[]
    \mathrm{I} &\geq
    \begin{multlined}[t]
      {\kappa\over 4\beta^2}\|s\|^2_{\pi^{-1}({\rm Supp}(\dd f))}+O\left(\varepsilon\over \beta^2\right)\|s\|^2_{\pi^{-1}({\rm Supp}(\dd f))} +O_{R}\left({\varepsilon\gamma\over \beta}\right)\|s\|^2_{\pi^{-1}({\rm Supp}(\dd f))} \\
      +O\left(1\over \beta^2 R\right)\|s\|^2_{\pi^{-1}({\rm Supp}(\dd f))}+O_{m,R}\left({1\over \beta}+{\gamma^2\over\beta^2}\right)\|s\|^2_{\pi^{-1}({\rm Supp}(\dd f))}
    \end{multlined}\\
    &=
    \begin{multlined}[t]
    {\kappa\over 8\beta^2}\|s\|^2_{\pi^{-1}({\rm Supp}(\dd f))} +\left( {\kappa\over 8\beta^2}\|s\|^2_{\pi^{-1}({\rm Supp}(\dd f))}
    +O\left(\varepsilon\over \beta^2\right)\|s\|^2_{\pi^{-1}({\rm Supp}(\dd f))}\right)\\+O_{R}\left({\varepsilon\gamma\over \beta}\right)\|s\|^2_{\pi^{-1}({\rm Supp}(\dd f))}
    +O\left(1\over \beta^2 R\right)\|s\|^2_{\pi^{-1}({\rm Supp}(\dd f))}\\+O_{m,R}\left({1\over \beta}+{\gamma^2\over\beta^2}\right)\|s\|^2_{\pi^{-1}({\rm Supp}(\dd f))}    
    \end{multlined}\\
    &\geq
    \begin{multlined}[t]
    {\kappa\over 8\beta^2}\|s\|^2_{\pi^{-1}({\rm Supp}(\dd f))} +O_{R}\left({\varepsilon\gamma\over \beta}\right)\|s\|^2_{\pi^{-1}({\rm Supp}(\dd f))}\\
    +O\left(1\over \beta^2 R\right)\|s\|^2_{\pi^{-1}({\rm Supp}(\dd f))}+O_{m,R}\left({1\over \beta}+{\gamma^2\over\beta^2}\right)\|s\|^2_{\pi^{-1}({\rm Supp}(\dd f))},
    \end{multlined}
  \end{aligned}
\end{equation}
where for the last inequality, we have chosen small enough $\varepsilon>0$ such that
\begin{equation*}
  {\kappa\over 8\beta^2}\|s\|^2_{\pi^{-1}({\rm Supp}(\dd f))}
  +O\left(\varepsilon\over \beta^2\right)\|s\|^2_{\pi^{-1}({\rm Supp}(\dd f))}\geq 0.
\end{equation*}

By (\ref{2.11a}), on $\widehat \mM_{\mH_{3m},R}\setminus \pi^{-1}\left({\rm Supp}(\dd f)\right)$, one has
\begin{align}\label{2.9}
{(\widehat{f}^*_{3m,R}V)}^2\geq \delta.
\end{align}

Recall that on $M$, we have assumed that $k^{F}$ is nonnegative, i.e.,~(\ref{eq:kf-lb}). As a result, $\inf (k^\mF)\geq 0$ holds on $\pi^{-1}(B_{2m}\setminus {\rm Supp}({\rm d}f))$.
Moreover, on $\pi^{-1}(B_{2m}\setminus {\rm Supp}({\rm d}f))$, we have $R^{\mE_{3m,R}}=0$. Therefore, from (\ref{2.27a}), (\ref{2.27}), (\ref{2.9}) and proceeding as in \cite[p. 1058-1059]{Z17},  we obtain
\begin{multline}\label{2.32a}
\mathrm{II} \geq {\delta\varepsilon^2\over \beta^2}\|\varphi_{m,1}s\|^2_{\pi^{-1}(B_{2m}\setminus{\rm Supp}(\dd f))}\\
  +O_{m,R}\left(1\over \beta\right)\|\varphi_{m,1}s\|^2_{\pi^{-1}(B_{2m}\setminus{\rm Supp}(\dd f))}\\
  +O_{m,R}\left(\gamma^2\over \beta^2\right)\|\varphi_{m,1}s\|^2_{\pi^{-1}(B_{2m}\setminus{\rm Supp}(\dd f))}\\
  +O_m\left(1\over\beta^2 R\right)\|\varphi_{m,1}s\|^2_{\pi^{-1}(B_{2m}\setminus{\rm Supp}(\dd f))}.
\end{multline}

For the second term on the r.h.s.\ of (\ref{0.22a}), by (\ref{eq:df-b}), (\ref{0.22}), (\ref{721}), (\ref{2.27a}) and (\ref{2.9}), one has
\begin{multline}\label{2.33}
\Big\|\Big(D ^{\mE_{3m,R}}_{\mF\oplus\mF_{1}^\perp,\beta,\gamma}+\frac{\widehat c(\widetilde\sigma)}{\beta}
    +\frac{\varepsilon \widehat{f}^*_{3m,R}V}{\beta}\Big)(\varphi_{m,2}s)\Big\|^2\\
    =\Big \|\Big(D ^{\mE_{3m,R}}_{\mF\oplus\mF_{1}^\perp,\beta,\gamma}+\frac{\widehat c(\widetilde\sigma)}{\beta}\Big)(\varphi_{m,2}s)\Big\|^2 +
    \Big\|\frac{\varepsilon \widehat{f}^*_{3m,R}V}{\beta}(\varphi_{m,2}s)\Big\|^2
    \geq {{\delta\varepsilon^2}\over{\beta^2}}\left\|\varphi_{m,2}s\right\|^2.
\end{multline}

From (\ref{2.35j}), (\ref{2.31a}), (\ref{2.32a}) and (\ref{2.33}), one has
\begin{multline}\label{2.34}
  \sum_{j=1}^2\Big\|\Big(D ^{\mE_{3m,R}}_{\mF\oplus\mF_{1}^\perp,\beta,\gamma}+\frac{\widehat c(\widetilde\sigma)}{\beta}
  +\frac{\varepsilon \widehat{f}^*_{3m,R}V}{\beta}\Big)(\varphi_{m,j}s)\Big\|^2
  \geq \min\left\{{\kappa\over 8},\delta\varepsilon^2\right\}{\|s\|^2\over \beta^2}\\
  +O_{R}\left(\varepsilon\gamma\over \beta\right)\|s\|^2_{\pi^{-1}({\rm Supp}(\dd f))}+O_{m,R}\left(\gamma^2\over \beta^2\right)\|\varphi_{m,1}s\|^2\\
  +O_{m,R}\left(1\over \beta\right)\|\varphi_{m,1}s\|^2+O_{m}\left({1\over \beta^2 R}\right)\|\varphi_{m,1}s\|^2.
\end{multline}

From (\ref{0.22a}), (\ref{2.19}) and (\ref{2.34}), by taking $m$ sufficiently
large and then taking $R$ sufficiently large, one finds that there exist $c_0>0$, $\varepsilon>0$, $m>0$ and $R>0$ such that when $\beta>0, \gamma>0$ are small enough (\ref{0.16}) holds, i.e.,\ part (i) of the lemma.

{The strategy to prove part (ii) of the lemma is similar to that of part (i).} For any smooth section $s$ in question, one has as in (\ref{0.22a}) that
\begin{multline}\label{1.26e}
 \sqrt{2}\Big\|\Big( h\left(\frac{\rho}{R}\right)  D ^{\mE_{3m,R}}_{\mF\oplus\mF_{1}^\perp,\beta,\gamma} h\left(\frac{\rho}{R}\right)  
+\frac{\widehat c(\widetilde\sigma)}{\beta}
+\frac{\varepsilon\widehat{f}^*_{3m,R}V}{\beta}\Big)s\Big\|
\\
\geq \Big\|\Big(h\left(\frac{\rho}{R}\right)  D ^{\mE_{3m,R}}_{\mF\oplus\mF_{1}^\perp,\beta,\gamma}h\left(\frac{\rho}{R}\right)  
+\frac{\widehat c(\widetilde\sigma)}{\beta}
+\frac{\varepsilon\widehat{f}^*_{3m,R}V}{\beta}\Big)(\varphi_{m,1}s)\Big\|
\\
\qquad+\Big\| \Big(h\left(\frac{\rho}{R}\right)  D ^{\mE_{3m,R}}_{\mF\oplus\mF_{1}^\perp,\beta,\gamma}h\left(\frac{\rho}{R}\right)  
+\frac{\widehat c(\widetilde\sigma)}{\beta}
+\frac{\varepsilon\widehat{f}^*_{3m,R}V}{\beta}\Big)(\varphi_{m,2}s)\Big\| \\
-\|c_{\beta,\gamma}({\rm d}\varphi_{m,1})s\|
-\|c_{\beta,\gamma}({\rm d}\varphi_{m,2})s\|.
\end{multline}

By a direct calculation (comparing with \cite[(2.29)]{Z17}),
\begin{equation}
  \label{0.25}
  \begin{aligned}
    &\quad \Big(h\left(\frac{\rho}{R}\right) D ^{\mE_{3m,R}}_{\mF\oplus\mF_{1}^\perp,\beta,\gamma}
    h\left(\frac{\rho}{R}\right) 
    +\frac{\widehat c\left(\widetilde\sigma\right)}{\beta}
    +\frac{\varepsilon\widehat{f}^*_{3m,R}V}{\beta}\Big)^2\\
    =&
    \begin{multlined}[t][0.9\textwidth]
      \Big(h\left(\frac{\rho}{R}\right) D ^{\mE_{3m,R}}_{\mF\oplus\mF_{1}^\perp,\beta,\gamma}
      h\left(\frac{\rho}{R}\right) + \frac{\widehat c\left(\widetilde\sigma\right)}{\beta}\Big)^2\\
      +h\left(\frac{\rho}{R}\right) ^2\Big[D ^{\mE_{3m,R}}_{\mF\oplus\mF_{1}^\perp,\beta,\gamma}
      ,\frac{\varepsilon\widehat{f}^*_{3m,R}V}{\beta}\Big]
      +\frac{\varepsilon^2(\widehat{f}^*_{3m,R}V)^2}{\beta^2}
    \end{multlined}\\
    =&
    \begin{multlined}[t][0.9\textwidth]
      \Big(h\left(\frac{\rho}{R}\right) D ^{\mE_{3m,R}}_{\mF\oplus\mF_{1}^\perp,\beta,\gamma}
      h\left(\frac{\rho}{R}\right)\Big)^2
      +\frac{h(\frac{\rho}{R})^2}{\beta}\left[D ^{\mE_{3m,R}}_{\mF\oplus\mF_{1}^\perp,\beta,\gamma},\widehat c(\widetilde\sigma)\right]+\frac{|\widetilde\sigma|^2}{\beta^2}\\
      +\frac{h\left(\frac{\rho}{R}\right)^2}{\beta}\left [ D ^{\mE_{3m,R}}_{\mF\oplus\mF_{1}^\perp,\beta,\gamma}
      , {\varepsilon\widehat{f}^*_{3m,R}V}\right]
      +\frac{\varepsilon^2(\widehat{f}^*_{3m,R}V)^2}{\beta^2}.
    \end{multlined}
  \end{aligned}
\end{equation}

We estimate the second term on the r.h.s.\ of (\ref{1.26e}) first. By (\ref{eq:df-b}), (\ref{0.22}), (\ref{2.27a}), (\ref{2.9}) and the first equality in (\ref{0.25}), one has
\begin{multline}\label{2.35a}
  \Big\|\Big(h\left(\frac{\rho}{R}\right) D ^{\mE_{3m,R}}_{\mF\oplus\mF_{1}^\perp,\beta,\gamma}
  h\left(\frac{\rho}{R}\right) 
  +\frac{\widehat c\left(\widetilde\sigma\right)}{\beta}
  +\frac{\varepsilon \widehat{f}^*_{3m,R}V}{\beta}\Big)(\varphi_{m,2}s)\Big\|^2\\
= \Big\|\Big(h\left(\frac{\rho}{R}\right) D ^{\mE_{3m,R}}_{\mF\oplus\mF_{1}^\perp,\beta,\gamma} h\left(\frac{\rho}{R}\right) + \frac{\widehat c\left(\widetilde\sigma\right)}{\beta}\Big)(\varphi_{m,2})s\Big\|^2 + \Big\|\frac{\varepsilon \widehat{f}^*_{3m,R}V}{\beta}(\varphi_{m,2}s)\Big\|^2\\
\geq {\varepsilon^2\delta\over \beta^2}\left\|\varphi_{m,2}s\right\|^2.
\end{multline}

To estimate the first term on the r.h.s.\ of (\ref{1.26e}), we use the following fact. By the definition of $\widetilde{\sigma}$, since now
\begin{equation*}
  \supp{(s)} \subseteq \mM_{\mH_{3m},R}\setminus  \mM_{\mH_{3m},\frac{R}{2}},
\end{equation*}
we have by (\ref{0.13})
\begin{align}
  \label{eq:est-sigma}
  \left (\frac{|\widetilde\sigma|^2}{\beta^2} \varphi_{m,1}s, \varphi_{m,1}s\right) \ge \frac{1}{\beta^2}\| \varphi_{m,1} s\|^2.
\end{align}

From (\ref{2.26a}), (\ref{2.27a}), (\ref{2.27}), {(\ref{eq:est-sigma})} and the second equality in (\ref{0.25}), one gets
 \begin{equation}\label{2.36a}
  \begin{aligned}
    & \quad\Big\|\Big(h\left(\frac{\rho}{R}\right) D ^{\mE_{3m,R}}_{\mF\oplus\mF_{1}^\perp,\beta,\gamma} h\left(\frac{\rho}{R}\right) +\frac{\widehat c\left(\widetilde\sigma\right)}{\beta} + \frac{\varepsilon \widehat{f}^*_{3m,R}V}{\beta}\Big)(\varphi_{m,1}s)\Big\|^2\\
    \ge&
    \begin{multlined}[t][0.9\textwidth]
      \left(\frac{|\widetilde\sigma|^2}{\beta^2} \varphi_{m,1}s, \varphi_{m,1}s\right) + \left(\frac{h(\frac{\rho}{R})^2}{\beta}\left[D ^{\mE_{3m,R}}_{\mF\oplus\mF_{1}^\perp,\beta,\gamma},{\widehat c(\widetilde\sigma)}\right]\varphi_{m,1}s,\varphi_{m,1}s\right) \\
        + \left(\frac{h(\frac{\rho}{R})^2}{\beta}\left[ D ^{\mE_{3m,R}}_{\mF\oplus\mF_{1}^\perp,\beta,\gamma}, {\varepsilon\widehat{f}^*_{3m,R}V}\right] \varphi_{m,1}s, \varphi_{m,1}s\right)
    \end{multlined}\\
    \geq&
    \begin{multlined}[t][0.9\textwidth]
      {1\over \beta^2}\|\varphi_{m,1}s\|^2+\Big(O_m\left(1\over \beta^2 R\right)+O_{m,R}\left(1\over \beta\right)\Big)\|\varphi_{m,1}s\|^2\\
      +\Big(O\left(\varepsilon\over\beta^2\right)+O_{R}\left(\varepsilon\gamma\over \beta\right)\Big)\|\varphi_{m,1}s\|^2_{\pi^{-1}({\rm Supp}(\dd f))}
    \end{multlined}\\
    =&
    \begin{multlined}[t][0.9\textwidth]
      {1\over 2\beta^2}\|\varphi_{m,1}s\|^2+\Big(O_m\left(1\over \beta^2 R\right)+O_{m,R}\left(1\over \beta\right)\Big)\|\varphi_{m,1}s\|^2\\
      +\Big( {1\over 2\beta^2}\|\varphi_{m,1}s\|^2+O\left(\varepsilon\over\beta^2\right)\|\varphi_{m,1}s\|^2_{\pi^{-1}({\rm Supp}(\dd f))}\Big)\\+O_R\left(\varepsilon\gamma\over \beta\right)\|\varphi_{m,1}s\|^2_{\pi^{-1}({\rm Supp}(\dd f))}
    \end{multlined}\\   
   \geq&
   \begin{multlined}[t][0.9\textwidth]
     {1\over 2\beta^2}\|\varphi_{m,1}s\|^2+ \Big(O_m\left(1\over \beta^2 R\right)+O_{m,R}\left({1\over \beta}\right)\Big)\|\varphi_{m,1}s\|^2 \\
     + O_R\left(\varepsilon\gamma\over \beta\right)\|\varphi_{m,1}s\|^2_{\pi^{-1}({\rm Supp}(\dd f))},
    \end{multlined}
  \end{aligned}
\end{equation} 
where for the last inequality, we have chosen small enough $\varepsilon>0$ such that
 \begin{equation*}
  {1\over 2\beta^2}\|\varphi_{m,1}s\|^2+O\left(\varepsilon\over\beta^2\right)\|\varphi_{m,1}s\|^2_{\pi^{-1}({\rm Supp}(\dd f))}\geq 0.
 \end{equation*}

From (\ref{2.35a}) and (\ref{2.36a}), one gets
\begin{multline}\label{2.39}
\sum_{j=1}^2\Big\|\Big(h\left(\frac{\rho}{R}\right) D ^{\mE_{3m,R}}_{\mF\oplus\mF_{1}^\perp,\beta,\gamma}
 h\left(\frac{\rho}{R}\right) 
+\frac{\widehat c\left(\widetilde\sigma\right)}{\beta}
+\frac{\varepsilon \widehat{f}^*_{3m,R}V}{\beta}\Big)(\varphi_{m,j}s)\Big\|^2\\
\geq \min\{1,\varepsilon^2\delta\}{\|s\|^2\over 2\beta^2}+\Big(O_m\left(1\over \beta^2 R\right)+O_{m,R}\left({1\over \beta}\right)\Big)\|\varphi_{m,1}s\|^2\\
+O_R\left(\varepsilon\gamma\over \beta\right)\|\varphi_{m,1}s\|^2_{\pi^{-1}({\rm Supp}(\dd f))}.
\end{multline}

From (\ref{2.19}), {(\ref{1.26e})} and (\ref{2.39}), by taking $m$ sufficiently large and then taking $R$ sufficiently large, one finds that there exist $c_0>0$, $\varepsilon>0$, $m>0$ and $R>0$ such that when $\beta>0, \gamma>0$ are small enough (\ref{0.17}) holds, {i.e.,\ part (ii) of the lemma}.

\end{proof}

\subsection{Elliptic operators on
$\widetilde \mN_{ 3m,R}$}\label{s1.4}
Let $Q$ be a Hermitian vector bundle over $\widehat \mM_{\mH_{3m},R}$ such that $\left(S_{\beta,\gamma} \left (\mF\oplus\mF_{1}^\perp\right)\widehat\otimes
\Lambda^*\left(\mF_{2}^\perp \right)\widehat \otimes \mE_{m,R}\right)_{-}\oplus Q$ is a trivial vector bundle over $\widehat \mM_{\mH_{3m},R}$. Then, under the identification
\begin{equation*}
  \widehat c(\widetilde{\sigma})+\widehat f_{3m,R}^*v+{\rm Id}_Q,
\end{equation*}
$\left(S_{\beta,\gamma}  \left(\mF\oplus\mF_{1}^\perp\right)\widehat\otimes
\Lambda^*\left(\mF_{2}^\perp \right)\widehat \otimes \mE_{m,R}\right)_{+}\oplus Q$ is a trivial vector bundle near $\partial \widehat \mM_{\mH_{3m},R}$.

By obviously extending the above trivial vector bundles to $\widetilde \mN _{3m,R}$, we get a ${\mathbb{Z}}_2$-graded Hermitian vector bundle $\xi=\xi_+\oplus \xi_-$ over $\widetilde  \mN_{3m,R}$ and an odd self-adjoint endomorphism $W=w+w^*\in \Gamma({\rm End}(\xi))$ (with $w:\Gamma(\xi_+)\to \Gamma(\xi_-)$, $w^*$ being the adjoint of $w$) such that 
\begin{align*}\xi_\pm=\left(S_{\beta,\gamma}  \left(\mF\oplus\mF_{1}^\perp\right)\widehat\otimes
\Lambda^*\left(\mF_{2}^\perp\right )\widehat \otimes \mE_{m,R}\right)_\pm\oplus Q
\end{align*}
over $\widehat \mM_{\mH_{3m},R}$, $W$ is invertible on $\mN _{3m,R}$ and 
\begin{align}\label{0.27}
  W=  \widehat c\left(\widetilde\sigma\right)+\widehat{f}^*_{3m,R}V+
  \begin{pmatrix}0 & {\rm Id}_Q\\
    {\rm Id}_Q & 0\\
  \end{pmatrix}
\end{align}
on $\widehat \mM_{\mH_{3m},R}$, which is invertible on $\widehat\mM_{\mH_{3m},R}\setminus  \mM_{\mH_{3m},R/2}$.

Recall that $h({\rho}/{R})$ vanishes near $\mM_{\mH_{3m},R}\cap \partial  \mM_{R}$. We extend it to a function on $\widetilde \mN_{ 3m,R}$ which equals zero on $\mN _{3m,R}$ and an open neighborhood of $\partial \widehat \mM_{\mH_{3m},R}$ in $\widetilde \mN_{ 3m,R}$, and we denote the resulting function on $\widetilde \mN_{3m,R}$ by $\widetilde h_R$.

Let $\pi_{\widetilde \mN_{3m, R}}: T\widetilde \mN_{3m,R}\to \widetilde \mN_{3m, R}$ be the projection of the tangent bundle of $\widetilde \mN_{3 m,R}$. Let
\begin{equation*}
  \gamma^{\widetilde \mN_{3 m,R}}\in {\rm Hom}\big(\pi^*_{\widetilde \mN_{3m, R}}\xi_+,\pi^*_{\widetilde \mN_{ 3m,R}}\xi_-\big)
\end{equation*}
be the symbol defined for $p\in \widetilde \mN_{ 3m,R}$ and $u\in T_p \widetilde \mN_{ 3m,R}$ by 
\begin{align}\label{0.28}
  \gamma^{\widetilde \mN_{ 3m,R}}(p,u)=\pi^*_{\widetilde \mN_{3m, R}}\big(\sqrt{-1} \widetilde h ^2_R c_{\beta,\gamma}(u)+w(p)\big).
\end{align}
By (\ref{0.27}) and (\ref{0.28}), $\gamma^{\widetilde\mN_{3 m,R}}$ is singular only if $u=0$ and $p\in\mM_{\mH_{3m},R/2}$. Thus $\gamma^{\widetilde \mN_{ 3m,R}}$ is an elliptic symbol.

On the other hand, it is clear that $\widetilde h_R  D ^{\mE_{3m,R}}_{\mF\oplus\mF_{1}^\perp,\beta,\gamma}\widetilde h_R$ is well defined on $\widetilde \mN_{ 3m,R}$ if we define it to equal to zero on $\widetilde \mN_{ 3m,R}\setminus \widehat \mM_{\mH_{3m},R}$.

Let $A: L^2 (\xi)\to L^2 (\xi)$ be a second order positive elliptic differential operator on $\widetilde \mN_{ m,R}$ preserving the ${\mathbb{Z}}_2$-grading of $\xi=\xi_+\oplus \xi_-$, such that its symbol equals $|\eta|^2$ at $\eta\in T\widetilde \mN_{ 3m,R}$.\footnote{To be more precise, here $A$ also depends on the defining metric. We omit the corresponding subscript/superscript only for convenience.}
As in \cite[(2.33)]{Z17}, let
\begin{equation*}
  P^{\mE_{3m,R}}_{ R,\beta,\gamma}:L^2 (\xi)\to L^2 (\xi)
\end{equation*}
be the zeroth order pseudodifferential operator on $\widetilde \mN_{3m, R}$ defined by
\begin{align}\label{0.29}
P^{\mE_{3m,R}}_{R,\beta,\gamma}=A^{-\frac{1}{4}}\widetilde h_R D ^{\mE_{3m,R}}_{\mF\oplus\mF_{1}^\perp,\beta,\gamma}\widetilde h_R A^{-\frac{1}{4}}+\frac{W}{\beta}.
\end{align}
Let
\begin{equation*}
  P^{\mE_{3m,R}}_{R,\beta,\gamma,+}:L^2(\xi_+)\to L^2(\xi_-)
\end{equation*}
be the obvious restriction. Then the principal symbol of $P^{\mE_{3m,R}}_{R,\beta,\gamma,+}$, which we will denote by $\gamma(P^{\mE_{3m,R}}_{R,\beta,\gamma,+})$, is homotopic through elliptic symbols to $\gamma^{\widetilde \mN_{3m, R}}$. Thus $P^{\mE_{3m,R}}_{R,\beta,\gamma,+}$ is a Fredholm operator. Moreover, by the Atiyah-Singer index theorem \cite{ASI} (cf. \cite[Proposition III.13.8]{LaMi89}), we can calculate the index of $P^{\mE_{3m,R}}_{R,\beta,\gamma,+}$ as follows,
\begin{equation}\label{0.30}
  \begin{aligned}
    {\rm ind}\left(P^{\mE_{3m,R}}_{R,\beta,\gamma,+}\right)
    =\,&{\rm ind}\left(\gamma\left(P^{\mE_{3m,R}}_{R,\beta,\gamma,+}\right)\right)
      ={\rm ind}\left(\gamma^{\widetilde \mN_{ 3m,R}}\right) \\
    =\,& \left\langle \widehat{{A}}(TM) f^*\left({\rm ch}(S_+(TS^n(1)))-{\rm ch}(S_-(TS^n(1)))\right), [M] \right\rangle \\
    =\,& {\rm deg}(f) \Big\langle {\rm ch}(S_+(TS^n(1)))-{\rm ch}(S_-(TS^n(1))),[S^n(1)]\Big\rangle \\
    =\,& (-1)^{n\over 2}{\rm deg}(f)\chi(S^n(1)) =2(-1)^{n\over 2}{\rm deg}(f) \neq 0.
  \end{aligned}
\end{equation}
In (\ref{0.30}), for the fourth equality, we use the fact that ${\rm ch}(S_+(TS^n(1)))-{\rm ch}(S_-(TS^n(1)))$ has only the top degree; for the fifth equality, we use~\cite[Proposition III.11.24]{LaMi89}; and the last inequality comes from (\ref{2.2}).

For any $0\leq t\leq 1$, set
\begin{align}\label{0.31}
  P^{\mE_{3m,R}}_{R,\beta,\gamma,+}(t)=P^{\mE_{3m,R}}_{R,\beta,\gamma,+}+\frac{(t-1)w}{\beta}+A^{-\frac{1}{4}}\frac{(1-t)w}{\beta}A^{-\frac{1}{4}}.
\end{align}
Then $P^{\mE_{3m,R}}_{R,\beta,\gamma,+}(t)$ is a smooth family of zeroth order pseudodifferential operators such that the corresponding symbol $\gamma(P^{\mE_{3m,R}}_{R,\beta,\gamma,+}(t))$ is elliptic for $0<t\leq 1$. Thus $P^{\mE_{3m,R}}_{R,\beta,\gamma,+}(t)$ is a continuous family of Fredholm operators for $0<t\leq 1$ with
\begin{equation*}
  P^{\mE_{3m,R}}_{R,\beta,\gamma,+}(1)=P^{\mE_{3m,R}}_{R,\beta,\gamma,+}.
\end{equation*}

Now since $P^{\mE_{3m,R}}_{R,\beta,\gamma,+}(t)$ is continuous on the whole $[0,1]$, if $P^{\mE_{3m,R}}_{R,\beta,\gamma,+}(0)$ is Fredholm and has vanishing index, then we would reach a contradiction with respect to equation (\ref{0.30}), and then complete the proof of Theorem \ref{t1.1}.

Thus we need only to prove the following analog of \cite[Proposition 2.5]{Z17}.

\begin{prop}\label{t0.5}
There exist $\varepsilon,m,R,\beta,\gamma>0$ such that the following identity holds:
\begin{align*}\dim\left({\rm ker}\left(P^{\mE_{3m,R}}_{R,\beta,\gamma,+}(0)\right)\right)=\dim\left({\rm ker}\left(P^{\mE_{3m,R}}_{R,\beta,\gamma,+}(0)^*\right)\right)=0.
\end{align*}
\end{prop}

\begin{proof}
Let $P^{\mE_{3m,R}}_{R,\beta,\gamma}(0): L^2 (\xi)\to L^2 (\xi)$ be given by
\begin{align}\label{0.33}
P^{\mE_{3m,R}}_{R,\beta,\gamma}(0)=A^{-\frac{1}{4}}\widetilde h_R D ^{\mE_{3m,R}}_{\mF\oplus\mF_{1}^\perp,\beta,\gamma}\widetilde h_R A^{-\frac{1}{4}}+A^{-\frac{1}{4}}\frac{W}{\beta}A^{-\frac{1}{4}}.
\end{align}

Since $P^{\mE_{3m,R}}_{R,\beta,\gamma}(0)$ is formally self-adjoint, by (\ref{0.29}) and (\ref{0.31}), we need only to show that 
\begin{align*}\dim\left({\rm ker}\left(P^{\mE_{3m,R}}_{R,\beta,\gamma}(0)\right)\right)=0
\end{align*}
for certain $\varepsilon, m, R, \beta,\gamma>0$. 

Let $s\in {\rm ker}(P^{\mE_{3m,R}}_{R,\beta,\gamma}(0))$. By (\ref{0.33}), one has
\begin{align}\label{0.35}
\left(\widetilde h_R D ^{\mE_{3m,R}}_{\mF\oplus\mF_{1}^\perp,\beta,\gamma}\widetilde h_R+\frac{W}{\beta}\right)A^{-\frac{1}{4}}s=0.
\end{align}

Since $\widetilde h_R=0$ on $\widetilde \mN_{ 3m,R}\setminus \widehat \mM_{\mH_{3m},R}$, while $W$ is invertible on $\widetilde \mN_{3m, R}\setminus \widehat \mM_{\mH_{3m},R}$, by (\ref{0.35}), one has
\begin{align*}A^{-\frac{1}{4}}s=0\ \ {\rm on}\ \ \widetilde \mN_{ 3m,R}\setminus \widehat \mM_{\mH_{3m},R}.
\end{align*}

Write on $\widehat \mM_{\mH_{3m},R}$ that
\begin{align}\label{0.37}
A^{-\frac{1}{4}}s=s_1+s_2,
\end{align}
with $s_1\in L^2 \left(S_{\beta,\gamma} \left (\mF\oplus\mF_{1}^\perp\right)\widehat\otimes
\Lambda^*\left(\mF_2^\perp \right)\widehat \otimes \mE_{3m,R}\right)$ and $s_2\in L^2 (Q\oplus Q)$.

By (\ref{0.27}), (\ref{0.35}) and (\ref{0.37}), one has
\begin{align*}s_2=0,
\end{align*}
while
\begin{align}\label{0.39}
\bigg(\widetilde h_R D ^{\mE_{3m,R}}_{\mF\oplus\mF_{1}^\perp,\beta,\gamma}\widetilde h_R+\frac{ \widehat c(\widetilde\sigma)}{\beta}+\frac{\varepsilon\widehat{f}^*_{3m,R}V}{\beta}\bigg)s_1=0.
\end{align}

We need to show that (\ref{0.39}) implies $s_1=0$.  

As in (\ref{1.26e}), one has 
\begin{multline}\label{0.40}
 \sqrt{2}\bigg\|\bigg(\widetilde h_R  D ^{\mE_{3m,R}}_{\mF\oplus\mF_{1}^\perp,\beta,\gamma} \widetilde h_R 
+\frac{\widehat c(\widetilde\sigma)}{\beta}
+\frac{\varepsilon\widehat{f}^*_{3m,R}V}{\beta}\bigg)s_1\bigg\|
\\
\geq \bigg\|\bigg(\widetilde h_R  D ^{\mE_{3m,R}}_{\mF\oplus\mF_{1}^\perp,\beta,\gamma}\widetilde h_R 
+\frac{\widehat c(\widetilde\sigma)}{\beta}
+\frac{\varepsilon\widehat{f}^*_{3m,R}V}{\beta}\bigg)(\varphi_{m,1}s_1)\bigg\|
\\
\qquad+\bigg\| \bigg(\widetilde h_R   D ^{\mE_{3m,R}}_{\mF\oplus\mF_{1}^\perp,\beta,\gamma}\widetilde h_R  
+\frac{\widehat c(\widetilde\sigma)}{\beta}
+\frac{\varepsilon\widehat{f}^*_{3m,R}V}{\beta}\bigg)(\varphi_{m,2}s_1)\bigg\| \\
-\|c_{\beta,\gamma}({\rm d}\varphi_{m,1})s_1\|
-\|c_{\beta,\gamma}({\rm d}\varphi_{m,2})s_1\|.
\end{multline}

By proceeding as in the proof of (\ref{2.33}), one gets
\begin{equation}\label{0.41}
  \bigg\|\bigg(\widetilde h_R D ^{\mE_{3m,R}}_{\mF\oplus\mF_{1}^\perp,\beta,\gamma}
      \widetilde h_R 
      +\frac{\widehat c\left(\widetilde\sigma\right)}{\beta}
      +\frac{\varepsilon\widehat{f}^*_{3m,R}V}{\beta}\bigg)(\varphi_{m,2}s_1)\bigg\|^2
  \geq 
  \frac{\varepsilon^2\delta}{\beta^2} \left\|  \varphi_{m,2}s_1 \right\|^2.
\end{equation}

On the other hand, we can use Lemma \ref{t0.4} and proceed as in \cite[p. 1062]{Z17}.
Especially, we need to choose the parameters in the following way.
Firstly, we fix a small enough $\varepsilon>0$.
Then we choose $m>0$ sufficiently large.
The next step is taking $R>0$ sufficiently large.
Finally, we choose a small enough $\gamma$.
With these parameters fixed, we can find a constant $c_1 > 0$ such that for any $\beta$ sufficiently small, the following inequality holds
\begin{align}\label{0.42}
\bigg\|\bigg(\widetilde h_R D ^{\mE_{3m,R}}_{\mF\oplus\mF_{1}^\perp,\beta,\gamma}
 \widetilde h_R 
+\frac{\widehat c\left(\widetilde\sigma\right)}{\beta}
+\frac{\varepsilon\widehat{f}^*_{3m,R}V}{\beta}\bigg)(\varphi_{m,1}s_1)\bigg\| 
\geq 
\frac{c_1}{\beta} \|  \varphi_{m,1} s_1 \| .
\end{align}

From {(\ref{2.19})} and (\ref{0.40})-(\ref{0.42}), by using the same order to choose parameters as in (\ref{0.42}), one finds that there exist $c_2>0$, $\varepsilon>0$, $m>0$, $R>0$ and $\gamma >0$ such that when $\beta$ are sufficiently small, one has 
\begin{align*}\label{0.43}
\bigg\|\bigg(\widetilde h_R D ^{\mE_{3m,R}}_{\mF\oplus\mF_{1}^\perp,\beta,\gamma}
 \widetilde h_R 
+\frac{\widehat{c}(\widetilde{\sigma})}{\beta}+\frac{\varepsilon\widehat{f}^*_{3m,R}V}{\beta}
\bigg)  s_1 \bigg\| 
\geq 
\frac{c_2}{\beta} \|  s_1 \| ,
\end{align*}
which implies, via (\ref{0.39}), $s_1=0$. 
\end{proof}

\subsection{The general case}
\label{sub:locally-const-near}
Till now, we only deal with the case that $f$ is constant near infinity.
To handle the general case that $f$ is locally constant near infinity as stated in Theorem~\ref{t1.1}, we need some modification for the proof.
Note that the most arguments in Subsection~\ref{s1.4} are independent of whether $f$ is constant or locally constant near infinity.
Therefore, what we need to do mainly is to establish Lemma~\ref{t0.4} in this general case.

We use the same notation in this subsection as in Subsection~\ref{s1.1}.
But, now, outside the compact subset $K \subseteq M$, $f$ is locally constant.
Note that the number of connected components of $M\setminus M_{H_{3m}}$ is finite at most.
Let $\{Y_k\}_{k=1}^{l}$ be the connected components of $M \setminus M_{H_{3m}}$.
Assume $f(Y_k)=p_k \in S^n(1)$, $k= 1,\dots, l$. 

Since outside $K$, $f$ may take several values now, we need to modify the construction of the endomorphism $V$ (or $v$) a little.
Due to $\mathrm{deg}(f) \neq 0$, $f$ is a surjective map.
Thus, we can choose a regular value $\mathfrak{p}\neq p_1,\dots, p_l$ of $f$.
Now, we can choose
\begin{equation*}
  v=c(X):S_+(TS^n(1))\to S_-(TS^n(1)),
\end{equation*}
where $X$ is a smooth vector field such that $|X|>0$ on $S^n(1) \setminus \{\mathfrak{p}\}$.
Then, $v$ is invertible over $S^n(1) \setminus \{\mathfrak{p}\}$ and define $V = v + v^*$ as before.

The main difficulty about this general case is how to extend $f$ from $M_{H_{3m}}$ to $\widehat{M}_{H_{3m}}$.
To deal with this problem, we choose a point $p_0\in S^n(1) \setminus \{\mathfrak{p}\}$ and for $k= 1,\dots,l$, pick a curve $\xi_k(\tau), \tau\in [0,1]$, connecting $p_k$ and $p_0$ such that $\xi_k(\tau)\cap \{\mathfrak{p}\}=\emptyset$, $\tau\in [0,1]$.
Then, for $(y,\tau)\in H_{3m}\times [-1,2]$, $k=1,\dots,l$, we define
\begin{align*}
f(y,\tau)=
\begin{cases}
p_k,\ (y,\tau)\in (Y_k\cap H_{3m})\times [-1,0],\\
\xi_k(\tau),\ (y,\tau)\in (Y_k\cap H_{3m})\times [0,1],\\
p_0,\ (y,\tau)\in (Y_k\cap H_{3m})\times [1,2].
\end{cases}
\end{align*}
Note that some points of $\{p_k\}_{k=1}^l$ may coincide.

Recall that $H_{3m}\times [-1, -1+ \varepsilon')$ can be identified with a neighborhood $U$ of $H_{3m}$ in $M_{H_{3m}}$. Under such an identification, the above $f(y,\tau)$ coincides with $f$ on $U$. Thus, $f$ can be extended to a map on $M_{H_{3m}} \cup (H_{3m}\times [-1,2])$ via $f(y,\tau)$ and furthermore, such an extended map can be extended to $\widehat{M}_{H_{3m}}$ by setting $f(M'_{H_{3m}})={p_0}$.
Denote such a map on $\widehat{M}_{H_{3m}}$ by $f_l$.
We will use $f_l$ to substitute the role played by $f$ in Subsection~\ref{s1.2} \& \ref{s1.3}.
Especially, we note that $f_l$ has the following properties\footnote{In general, $f_l$ is only area decreasing along $F$ on $M_{H_{3m}}$ rather than on the whole $\widehat{M}_{H_{3m}}$ (even if in the case that $F$ can be extended to a foliation on $\widehat{M}_{H_{3m}}$). But this is enough for our purpose.}:
\begin{equation*}
  \supp(\dd f_l) \subseteq \supp(\dd f) \cup (H_{3m} \times [0,1]),\quad {\rm deg}(f_l)={\rm deg}(f)\neq 0,
\end{equation*}
and there exists $\delta > 0$ such that
\begin{equation*}
\left  (f_l^*V\right)^2 \ge \delta\text{ on } \widehat{M}_{H_{3m}} \setminus \supp(\dd f).
\end{equation*}
Hence, the following counterpart of (\ref{2.11a}) (or (\ref{2.9})) holds
\begin{equation*}
  \label{eq:fv-bd}
 \big (\widehat{f}_{l,3m,R}^*V\big)^2\geq \delta \text{ on }\ \widehat{\mM}_{\mH_{3m},R}\setminus \pi^{-1}(\supp(\dd f)).
\end{equation*}

Among the estimates in Subsection~\ref{s1.3}, the first one that needs to be modified is (\ref{2.27a}), which is changed to
\begin{align}\label{eq:n2.27a}
\bigg[D ^{\mE_{3m,R}}_{\mF\oplus\mF_{1}^\perp,\beta,\gamma},{{\varepsilon \widehat{f}^*_{{l,}3m,R}V}\over{\beta}}\bigg]= 0.
\end{align}
on $\widehat{\mM}_{\mH_{3m},R}\setminus \left(\pi^{-1}(\supp(\dd f))\cup (\mH_{3m,R}\times [0,1])\right)$. Due to our defintion of the metric on $\widehat{\mM}_{\mH_{3m},R}$, (\ref{eq:metric}), the metric on $\mH_{3m,R} \times [0,1]$ is independent of $\beta,\gamma$.
Therefore, we have
\begin{align}
  \label{eq:n2.27b}
 \bigg[D ^{\mE_{3m,R}}_{\mF\oplus\mF_{1}^\perp,\beta,\gamma},{{\varepsilon \widehat{f}^*_{{l,}3m,R}V}\over{\beta}}\bigg] = O_{m,R}\left({\varepsilon\over \beta}\right) \text{ on } \mH_{3m,R} \times [0,1].
\end{align}

By (\ref{eq:n2.27a}) and (\ref{eq:n2.27b}), (\ref{2.33}) is changed to
\begin{multline*}\bigg\|\bigg(D ^{\mE_{3m,R}}_{\mF\oplus\mF_{1}^\perp,\beta,\gamma}+\frac{\widehat c(\widetilde\sigma)}{\beta} + \frac{\varepsilon \widehat{f}^*_{{l,}3m,R}V}{\beta}\bigg)(\varphi_{m,2}s)\bigg\|^2 \geq {{\delta\varepsilon^2}\over{\beta^2}}\|\varphi_{m,2}s\|^2\\
  + O_{m,R}\left({\varepsilon\over\beta}\right)\big\|{\varphi_{m,2}}s\big\|^2_{\mH_{3m,R} \times [0,1]}.
\end{multline*}
Consequently, (\ref{2.34}) is changed to
\begin{multline*}\sum_{j=1}^2\bigg\|\bigg(D ^{\mE_{3m,R}}_{\mF\oplus\mF_{1}^\perp,\beta,\gamma}+\frac{\widehat c(\widetilde\sigma)}{\beta}
+\frac{\varepsilon \widehat{f}^*_{{l,}3m,R}V}{\beta}\bigg)(\varphi_{m,j}s)\bigg\|^2 \geq \min\left\{{\kappa\over 8},\delta\varepsilon^2\right\}{\|s\|^2\over \beta^2}\\
+ O_{R}\left(\varepsilon\gamma\over \beta\right)\|s\|^2_{\pi^{-1}({\rm Supp}(\dd f))}+O_{m,R}\left(\gamma^2\over \beta^2\right)\|\varphi_{m,1}s\|^2 + O_{m,R}\left(1\over \beta\right)\|\varphi_{m,1}s\|^2 \\
+ O_{m}\left({1\over \beta^2 R}\right) \|\varphi_{m,1}s\|^2 + O_{m,R}\left({\varepsilon\over\beta}\right) \big\|{\varphi_{m,2}}s\big\|^2_{\mH_{3m,R} \times [0,1]}.
\end{multline*}

Using these updated estimates, we can proceed as in Subsection~\ref{s1.3} to obtain Lemma~\ref{t0.4}.

Now, the only point in Subsection~\ref{s1.4} that we need to change is to replace (\ref{0.41}) with the following estimate,
\begin{multline*}\label{eq:n0.41}
\bigg\|\bigg(\widetilde h_R D ^{\mE_{3m,R}}_{\mF\oplus\mF_{1}^\perp,\beta,\gamma}
 \widetilde h_R 
+\frac{\widehat c\left(\widetilde\sigma\right)}{\beta}
+\frac{\varepsilon\widehat{f}^*_{{l,}3m,R}V}{\beta}\bigg)(\varphi_{m,2}s_1)\bigg\|^2\\
\geq 
\frac{\varepsilon^2\delta}{\beta^2} \left\|  \varphi_{m,2}s_1 \right\|^2+O_{m,R}\left({\varepsilon\over \beta}\right)\big\|\varphi_{m,2}s_1\big\|_{\mH_{3m,R} \times [0,1]}^2.
 \end{multline*}
Then the proof of Theorem~\ref{t1.1} in this general case is completed.

\section{Proof of Theorem \ref{t1.1}: the odd dimensional case}\label{sec:odd}

\setcounter{equation}{0}

In this section, we prove Theorem \ref{t1.1} for the odd dimensional case.

Let $M$ be an odd dimensional noncompact manifold of dimension $n$ carrying the complete Riemannian metric $g^{TM}$. Let $F\subseteq TM$ be an integrable subbundle of $TM$. We will use the notation in Section \ref{s2}. Let $f:M\to S^{n}(1)$ be a smooth map which is area decreasing along $F$, locally constant near infinity and of non-zero degree. {Let $g^F=g^{TM}|_{F}$ be the restricted metric on $F$ and} let $k^F$ be the associated leafwise scalar curvature.
As in the even dimensional case, we assume that $TM$ is spin.
We still argue by contradiction, that is, we assume that (\ref{eq:kf-lb}) holds.

For any $r>1$, let $S^1(r)$ be the round circle of radius $r$, with the canonical metric {$\dd \theta^2$}. Let $M\times S^1(r)$ be the complete Riemannian manifold of the product metric $g^{TM}\oplus { \dd \theta^2}$. Following \cite{LL}, we consider the chain of maps
\begin{equation*}
  M\times S^1(r)\xrightarrow{f\times {1\over r}{\rm id}}S^{n}(1)\times S^1(1)\xrightarrow{{h}}S^{n}(1)\wedge S^1 (1)\cong S^{n+1}(1),
\end{equation*}
where $f\times {1\over r}{\rm id}$ is defined as
\begin{equation*}
  \Big(f\times {1\over r}{\rm id}\Big)(x,\theta)=\Big(f(x), {\theta\over r}\Big),\;(x,\theta)\in M\times S^1(r),
\end{equation*}
and ${h}$ is a suspension map of degree one such that $|{\rm d}{h}|\leq 1$. Let $f_r={h}\circ \left(f\times {1\over r}{\rm id}\right)$ denote the composition. Then one has 
\begin{equation*}
  {\rm deg}(f_r)={\rm deg}(f)\neq 0.
\end{equation*}

As in Subsection \ref{s1.2}, we can construct the manifold $\widehat{\mM}_{\mH_{3m},R}$ with the Riemannian metric $g^{T\widehat{\mM}_{\mH_{3m},R}}$ as (\ref{eq:metric}). Set
\begin{equation*}
  \widehat{\mM}_{\mH_{3m},R,r}=\widehat{\mM}_{\mH_{3m},R}\times S^1(r)
\end{equation*}
and the metric on it to be $g^{T\widehat{\mM}_{\mH_{3m},R}}\oplus {{\beta^2 \dd \theta^2}}$. Then
\begin{equation*}
  T\widehat \mM_{\mH_{3m},R,r}= \left(\left(\mF\oplus TS^1(r)\right)\oplus\mF^\perp_{1} \right)\oplus \mF^\perp_{2}\ \  {\rm on}\ \ \widehat \mM_{\mH_{3m},R,r}.
\end{equation*}

Let
\begin{equation*}
  \widehat{f}_{3m,R,r}={h}\circ \Big(\widehat{f}_{3m,R}\times {1\over r}{\rm id}\Big):\widehat{\mM}_{\mH_{3m},R,r}\to S^{n+1}(1)
\end{equation*}
and let $S(TS^{n+1}(1))=S_+(TS^{n+1}(1))\oplus S_-(TS^{n+1}(1))$ be the spinor bundle of $S^{n+1}(1)$.
The pull-back bundle of $S_{\pm}(TS^{n+1}(1))$ via $\widehat{f}_{3m,R,r}$ is denoted by
\begin{equation*}
  \Big(\mE_{3m,R,r,\pm},g^{\mE_{3m,R,r,\pm}},\nabla^{\mE_{3m,R,r,\pm}}\Big)
  = \widehat f_{3m,R,r}^*\left (S_{\pm}(TS^{n+1}(1)),g^{S_{\pm}(TS^{n+1}(1))},\nabla^{S_{\pm}(TS^{n+1}(1))}\right).
\end{equation*}
Then
\begin{equation*}
  \mE_{3m,R,r}=\mE_{3m,R,r,+}\oplus \mE_{3m,R,r,-}
\end{equation*}
is a ${\mathbb{Z}}_2$-graded Hermitian vector bundle over $\widehat \mM_{\mH_{3m},R,r}$. Let $D ^{\mE_{3m,R,r}}_{\mF\oplus\mF_{1}^\perp,\beta,\gamma}$ be the twisted sub-Dirac operator on $\widehat \mM_{\mH_{3m},R,r}$ defined as in (\ref{0.11}).

As (\ref{0.15}), for $\varepsilon>0$, we consider the operator
\begin{align}\label{0.15a}
 D ^{\mE_{3m,R,r}}_{\mF\oplus\mF_{1}^\perp,\beta,\gamma}
+\frac{\widehat c(\widetilde\sigma)}{\beta}
+\frac{\varepsilon \widehat{f}_{3m,R,r}^*V}{\beta},
\end{align}
where $V:S(TS^{n+1}(1))\to S(TS^{n+1}(1))$ is the operator defined {in the same way as the operator $V$ appeared in Subsection~\ref{sub:locally-const-near} except that we should use $f_r$ to replace $f$. The map $\widehat{f}_{3m,R,r}$ in the above formula may be written as $\widehat{f}_{l,3m,R,r}$ in view of the symbols used in Subsection~\ref{sub:locally-const-near}. We omit the $l$ subscript to simplify the symbol a little. As before, there exists $\delta'>0$ such that 
  \begin{align}
    \label{eq:o-f-e}
    \left(\widehat{f}^*_{3m,R,r}V\right)^2\geq \delta'\ {\rm on}\ \widehat{\mM}_{\mH_{3m},R,r}\setminus \pi^{-1}({\rm Supp}(\dd f))\times S^1(r).
  \end{align}

  Let $f_{q+1}$ be an orthonormal basis of $(TS^{1}(r), {\dd \theta^2})$.
  Then proceeding as \cite[p. 68]{LL}, (\ref{2.25}) is replaced by
  \begin{multline}\label{2.25a}
    \bigg({1\over{2\beta^2}}\sum_{i,j=1}^{q+1}R^{\mE_{3m,R,r}}(f_i,f_j)c_{\beta,\gamma}(\beta^{-1}f_i)c_{\beta,\gamma}(\beta^{-1}f_j)s,s\bigg)_{\pi^{-1}({\rm Supp}(\dd f))\times S^1(r)}\\
    \geq {-{{q(q-1)}\over{4\beta^2}}}\big\|s\big\|^2_{\pi^{-1}({\rm Supp}(\dd f))\times S^1(r)}+O\left(1\over \beta^2 r\right)\big\|s\big\|^2_{\pi^{-1}({\rm Supp}(\dd f))\times S^1(r)},
  \end{multline}
for any $s\in \Gamma \left (S_{\beta,\gamma}   \left(\left(\mF\oplus TS^1(r)\right)\oplus \mF_{1}^\perp\right )\widehat\otimes
\Lambda^* \left(\mF_{2}^\perp \right)\widehat \otimes \mE_{3m,R,r} \right )$ supported in the interior of $\widehat\mM_{\mH_{3m},R,r}$.

Since $r>1$, proceeding as (\ref{eq:d-v}) and (\ref{eq:n-v}), on $\pi^{-1}({\rm Supp}(\dd f))\times S^1(r)$, one has
\begin{align}\label{2.26b}
  \bigg[D ^{\mE_{3m,R,r}}_{\mF\oplus\mF_{1}^\perp,\beta,\gamma},{{\varepsilon \widehat{f}^*_{3m,R,r}V}\over{\beta}}\bigg]=O\left({\varepsilon\over \beta^2 }\right)+O_{R}\left({\varepsilon\gamma\over \beta}\right).
\end{align}

On $\big(\widehat{\mM}_{\mH_{3m},R}\setminus \pi^{-1}({\rm Supp}(\dd f))\big)\times S^1(r)$, by (\ref{eq:n2.27a}), (\ref{eq:n2.27b}) and proceeding as (\ref{eq:d-v}), one has
\begin{align}
  \label{eq:o-f-e1}
  \bigg[D ^{\mE_{3m,R,r}}_{\mF\oplus\mF_{1}^\perp,\beta,\gamma},{{\varepsilon \widehat{f}^*_{3m,R,r}V}\over{\beta}}\bigg]=O_{m,R}\left(\varepsilon\over \beta\right)+O_{m,R}\left({\varepsilon\over \beta^2 r }\right).
\end{align}

On $\pi^{-1}(B_{2m}\setminus {\rm Supp}(\dd f))\times S^1(r)$, we also have
\begin{align}\label{725}
R^{{\mE_{3m,R,r}}}=0,\quad \inf(k^\mF)\geq 0.
\end{align}

Define $\varphi_{m,i,r}:\widehat{\mM}_{\mH_{3m},R,r}\to [0,1]$, $i=1,2$, to be the pull-back of $\varphi_{m,i}$ via the projection $\widehat{\mM}_{\mH_{3m},R,r}$ to $\widehat{\mM}_{\mH_{3m},R}$.
Then, we can argue as in the proof of Lemma \ref{t0.4} by using (\ref{eq:o-f-e})--(\ref{725}).
The difference is that after fixing the parameters $\varepsilon, m, R, \gamma$ in the order given before, we further need to choose $r> 1$ sufficiently large.
As a result, for $\beta$ small enough, the analog of Lemma \ref{t0.4} still holds for the operator (\ref{0.15a}).

Similarly as (\ref{0.29}), we can define the pseudodifferential operator
$P^{\mE_{3m,R,r}}_{R,\beta,\gamma}$ and we also have
\begin{align}
  \label{eq:o-ind}
{\rm ind}\big(P^{\mE_{3m,R,r}}_{R,\beta,\gamma,+}\big)=2(-1)^{n+1\over 2}{\rm deg}(f_r)\neq 0.
\end{align}
On the other hand, proceeding as the proof of Proposition \ref{t0.5}, by taking the parameters in the order $\varepsilon, m, R, \gamma, r, \beta$, the analog of Proposition \ref{t0.5} still holds for the operator $P^{\mE_{3m,R,r}}_{R,\beta,\gamma,+}$, which contradicts to (\ref{eq:o-ind}).
The proof for the odd dimensional case is finished.

\section{Proof of Theorem \ref{th6}}
\label{sec:th6}

In this section, we prove Theorem \ref{th6}. 

{Let $(M,F)$ be a foliated manifold. Let $g^{TM}$ be a complete Riemannian metric on $TM$ }and let $g^{F}=g^{TM}|_{F}$ be the restricted metric on $F$. Let $k^F$ be the associated leafwise scalar curvature on $F$. We assume that the Riemannian metric $g^{TM}$ is $\Lambda^2$-enlargeable along $F$.

We assume that $\dim M$ is even. If $\dim M$ is odd, one may consider $M\times S^1$ and use the method in Section~\ref{sec:odd}.

We still argue by contradiction. Assume there is $\delta>0$ such that 
\begin{align}\label{aa1}
k^F\geq \delta\ \ {\rm over}\ \ M.
\end{align}

Let $F^\perp$ be the orthogonal complement to $F$, i.e., we have the orthogonal splitting 
 \begin{align}\label{aa5.1}
TM=F\oplus F^\perp,\ \ \ g^{TM}=g^F\oplus g^{F^\perp}.
\end{align}

By the definition, for any $\epsilon>0$, there exists a covering $\pi_\epsilon: M_\epsilon\to M$ such that either $M_\epsilon$ or $F_\epsilon$ (the lifted foliation of $F$ in $M_\epsilon$) is spin and a smooth map $f_\epsilon: M_\epsilon\to S^{\dim M}(1)$ which is $(\epsilon,\Lambda^2)$-contracting along $F_\epsilon$ (with respect to the lifted metric of $g^{TM}$), constant outside a compact subset $K_\epsilon$ and of non-zero degree.

We will give the proof for the $M_\epsilon$ spin case, since one can prove the $F_\epsilon$ spin case by combining the $M_\epsilon$ spin case and the argument in \cite[\S 2.5]{Z17}.

Let $g^{TM_\epsilon}=\pi^*_\epsilon g^{TM}$ be the lifted metric of $g^{TM}$ and let $g^{F_\epsilon}=\pi^*_\epsilon g^F$ be the lifted Euclidean metric on $F_\epsilon$. The splitting (\ref{aa5.1}) lifts canonically to a splitting 
 \begin{align*}\label{aa5.2}
TM_\epsilon=F_\epsilon\oplus F_\epsilon^\perp,\ \ \ g^{TM_\epsilon}=g^{F_\epsilon}\oplus g^{F_\epsilon^\perp}.
\end{align*}

If both $M$ and $M_\epsilon$ are compact, by \cite[Section 1.1]{Z19}, one gets a contradiction easily.

In the following, we assume that $M_\epsilon$ is noncompact.

For $(M_\epsilon, F_\epsilon)$ equipped with the metrics $(g^{TM_\epsilon}, g^{F_\epsilon})$ and the smooth map
\begin{equation*}
  f_\epsilon:M_\epsilon \allowbreak\to S^{\dim M}(1),
\end{equation*}
one can follow the steps shown in Section \ref{s2}. We will use $\epsilon$ to denote the corresponding objects in this case.

Let $\widetilde{\pi}_\epsilon:\mM_\epsilon \to M_\epsilon$ be the Connes fibration. Set
\begin{equation*}
  k^{F_\epsilon}=\pi^*_\epsilon\left(k^F\right),\quad k^{\mF_\epsilon}=\widetilde{\pi}^*_\epsilon\left(\pi^*_\epsilon(k^F)\right).
\end{equation*}

 With these settings, as in Section~\ref{s2}, the key to find a contradiction is to prove an analog of Lemma~\ref{t0.4} for the operator
  \begin{equation*}
    D^{\mE_{\epsilon,3m,R}}_{\mF_\epsilon\oplus\mF_{\epsilon,1}^\perp,\beta,\gamma} +\frac{\widehat c(\widetilde\sigma_\epsilon)}{\beta} +\frac{\varepsilon \widehat{f}_{\epsilon,3m,R}^*V_\epsilon}{\beta}.
  \end{equation*}
  To show such an analog, after checking the proof of Lemma~\ref{t0.4}, the we only need to prove estimates to replace (\ref{2.25}) and (\ref{g2.24}).

 Let $\nabla^{S(TS^{\dim M}(1))}$ be the canonical connection on the spinor bundle of $S^{\dim M}(1)$. Let $R^{S(TS^{\dim M}(1))}$ be the curvature tensor of the connection. Set\footnote{Here we need not use the precise estimate in \cite{LL}.}
\begin{align}\label{cc4.5}
C_1=\sup_{p\in S^{\dim M}(1)}\Big|R_p^{S\left(TS^{\dim M}(1)\right)}\Big|.
\end{align}

Choose a local frame of $\widehat{\mM}_{\mH_{\epsilon,3m},R}$ as in (\ref{eq:loc-fr}). By the $(\epsilon,\Lambda^2)$-contracting property of $f_\epsilon$, we have the following pointwise estimate,

 \begin{multline}\label{aa6}
   \Big| \Big({\frac{1}{2\beta^2}}\sum^{q}_{i,j=1}R^{\mE_{\epsilon,3m,R}}(f_i,f_j)c_{\beta,\gamma}(\beta^{-1}f_i)c_{\beta,\gamma}(\beta^{-1}f_j)s,s\Big)(x)\Big|\\
   \begin{aligned}
   =&\Big | \Big({\frac{1}{2\beta^2}}\sum^{q}_{i,j=1}\widehat{f}^*_{\epsilon,3m,R}\Big(R^{S}\big(\widehat{f}_{\epsilon,3m,R,*}f_i,\widehat{f}_{\epsilon,3m,R,*}f_j\big)\Big)c_{\beta,\gamma}(\beta^{-1}f_i)c_{\beta,\gamma}(\beta^{-1}f_j)s,s\Big)(x)\Big|\\
   =&\Big | \Big({\frac{1}{2\beta^2}}\sum^{q}_{i,j=1}\widehat{f}^*_{\epsilon,3m,R}\Big(R^{S}\big(\widehat{f}_{\epsilon,3m,R,*}(f_i\wedge f_j)\big)\Big)c_{\beta,\gamma}(\beta^{-1}f_i)c_{\beta,\gamma}(\beta^{-1}f_j)s,s\Big)(x)\Big|
   \end{aligned}
   \\   
   \leq  \frac{1 }{2\beta^2} q (q-1)C_1 \epsilon |s|^2(x),
 \end{multline}
where $R^S$ is the shorthand for $R^{S(TS^{\dim M}(1))}$.

 Now, we choose
  \begin{equation*}
    \epsilon = \frac{\delta}{4C_1q^2}.
  \end{equation*} 
 Then, using the notations in Section \ref{s2}, by (\ref{aa1}) and (\ref{aa6}), we have
\begin{multline*}
  \label{eq:k-pos}
  \bigg({\frac{1}{2\beta^2}}\sum^{q}_{i,j=1}R^{\mE_{\epsilon,3m,R}}(f_i,f_j)c_{\beta,\gamma}(\beta^{-1}f_i)c_{\beta,\gamma}(\beta^{-1}f_j)s,s\bigg)_{\widetilde{\pi}^{-1}_\epsilon(K_\epsilon)}\\
+ \left(\frac{k^{\mF_\epsilon}}{4\beta^2}s,s\right)_{{\widetilde{\pi}_\epsilon}^{-1}(K_\epsilon)} \geq {\frac{\delta}{8\beta^2}}\|s\|^2_{\widetilde{\pi}^{-1}_\epsilon(K_\epsilon)},
\end{multline*}
which can be used to replace (\ref{2.25}) and (\ref{g2.24}). The remaining argument to get a contradiction follows from the same method used in Section~\ref{s2}.

\section{Proof of Theorem \ref{t5}}
\label{sec:th5}

In this section, we prove Theorem \ref{t5}.

{Let $(M,F)$ be a foliated manifold.} We assume that $M$ is $\Lambda^2$-enlargeable along $F$. Let $g^{TM}$ be a complete {Riemannian} metric on $TM$ and $g^{F}=g^{TM}|_{F}$ be the restricted metric on $F$. Let $k^F$ be the associated leafwise scalar curvature on $F$.

As before, we argue by contradiction. Assume that 
\begin{equation*}
  k^F>0\ \ {\rm over}\ \ M.
\end{equation*}

Let $F^\perp$ be the orthogonal complement to $F$, i.e., we have the orthogonal splitting 
 \begin{align}\label{5.1}
   TM=F\oplus F^\perp,\ \ \ g^{TM}=g^F\oplus g^{F^\perp}.
 \end{align}

Inspired by the proof of \cite[Theorem 6.12]{GL83}, we consider another metric on $TM$ defined by $k^F g^{TM}$. By the definition, for the metric $k^F g^{TM}$ and any $\epsilon>0$, there exist a covering
\begin{equation*}
  \pi_\epsilon: M_\epsilon\to M
\end{equation*}
such that either $M_\epsilon$ or $F_\epsilon$ (the lifted foliation of $F$ in $M_\epsilon$) is spin and a smooth map
\begin{equation*}
  f_\epsilon: M_\epsilon\to S^{\dim M}(1)
\end{equation*}
which is $(\epsilon,\Lambda^2)$-contracting along $F_\epsilon$ for the lifted metric of $k^F g^{TM}$, constant outside a compact subset and of non-zero degree.

Let $g^{TM_\epsilon}=\pi^*_\epsilon g^{TM}$ be the lifted metric of $g^{TM}$ and let $g^{F_\epsilon}=\pi^*_\epsilon g^F$ be the lifted Euclidean metric on $F_\epsilon$. The splitting (\ref{5.1}) lifts canonically to a splitting 
 \begin{align*}\label{5.2}
   TM_\epsilon=F_\epsilon\oplus F_\epsilon^\perp,\ \ \ g^{TM_\epsilon}=g^{F_\epsilon}\oplus g^{F_\epsilon^\perp}.
 \end{align*}

We will give the proof for the $M_\epsilon$ spin case, since one can prove the $F_\epsilon$ spin case by combining the $M_\epsilon$ spin case and the argument in \cite[\S 2.5]{Z17}.

We first assume that $\dim M$ is even.

For $(M_\epsilon, F_\epsilon)$ equipped with the metrics $(g^{TM_\epsilon}, g^{F_\epsilon})$ and the smooth map
\begin{equation*}
  f_\epsilon:M_\epsilon \to S^{\dim M}(1),
\end{equation*}
one can follow the steps shown in Section \ref{s2}. We will use $\epsilon$ to denote the corresponding objects in this case.

Let $\widetilde{\pi}_\epsilon:\mM_\epsilon \to M_\epsilon$ be the Connes fibration.
We note that the (deformed or not) metric on $\mM_\epsilon$ is defined as in Subsection~\ref{s1.2}, which means that we use the metric $g^{TM_\epsilon}$ rather than the metric $k^{F_{\epsilon}}g^{TM_\epsilon}$ to define the metric on $\mM_\epsilon$.
Set
\begin{equation*}
  k^{F_\epsilon}=\pi^*_\epsilon\left(k^F\right) \quad k^{\mF_\epsilon}=\widetilde{\pi}^*_\epsilon\left(\pi^*_\epsilon\left(k^F\right)\right).
\end{equation*}

With these settings, as in Section~\ref{s2}, the key to find a contradiction is to prove an analog of Lemma~\ref{t0.4} for the operator
\begin{equation*}
  D^{\mE_{\epsilon,3m,R}}_{\mF_\epsilon\oplus\mF_{\epsilon,1}^\perp,\beta,\gamma} +\frac{\widehat c(\widetilde\sigma_\epsilon)}{\beta} +\frac{\varepsilon \widehat{f}_{\epsilon,3m,R}^*V_\epsilon}{\beta}.
\end{equation*}
As in the proof Theorem~\ref{th6}, to show such an analog, we only need to prove estimates to replace (\ref{2.25}) and (\ref{g2.24}).

Choose a local frame of $T{\mM}_{\epsilon,\mH_{3m},R}$ as in (\ref{eq:loc-fr}). By the $(\epsilon,\Lambda^2)$-contracting property of $f_\epsilon$ for the metric $k^{F_\epsilon} g^{TM_\epsilon}$ and $|f_i\wedge f_j|_{g^{TM_\epsilon}}=1$, $i\neq j$, we have
\begin{align}\label{ne5.4}
\left|f_{\epsilon,*}(f_i\wedge f_j)\right|\leq \epsilon \left|f_i\wedge f_j\right|_{k^{F_\epsilon} g^{TM_\epsilon}}=\epsilon k^{F_\epsilon},\ i\neq j.
\end{align}

Then for $x \in \widetilde{\pi}_\epsilon^{-1}(\supp(\dd f_\epsilon))$, by (\ref{ne5.4}), we have the following pointwise estimate,
\begin{multline}
  \label{eq:est-kf}
  \Big| \Big({\frac{1}{2\beta^2}}\sum^{q}_{i,j=1}R^{\mE_{\epsilon,3m,R}}(f_i,f_j)c_{\beta,\gamma}(\beta^{-1}f_i)c_{\beta,\gamma}(\beta^{-1}f_j)s,s\Big)(x)\Big|\\
  \begin{aligned}
  =& \Big | \Big({\frac{1}{2\beta^2}}\sum^{q}_{i,j=1}\widehat{f}^*_{\epsilon,3m,R}\Big(R^{S}(\widehat{f}_{\epsilon,3m,R,*}f_i,\widehat{f}_{\epsilon,3m,R,*}f_j)\Big)c_{\beta,\gamma}(\beta^{-1}f_i)c_{\beta,\gamma}(\beta^{-1}f_j)s,s\Big)(x)\Big|\\
  =& \Big| \Big({\frac{1}{2\beta^2}}\sum^{q}_{i,j=1}\widehat{f}^*_{\epsilon,3m,R}\Big(R^{S}(\widehat{f}_{\epsilon,3m,R,*}(f_i\wedge f_j))\Big)c_{\beta,\gamma}(\beta^{-1}f_i)c_{\beta,\gamma}(\beta^{-1}f_j)s,s\Big)(x)\Big|
  \end{aligned}\\
  \leq {1\over 2\beta ^2} q (q-1) C_1\epsilon k^{\mF_\epsilon}(x) |s|^2(x),
\end{multline}
where $C_1$ is the constant defined in (\ref{cc4.5}) and as before, $R^S$ is the shorthand for $R^{S(TS^{\dim M}(1))}$.

  Now, we choose
  \begin{equation}
    \label{eq:def-e}
    \epsilon = \frac{1}{4C_1q^2}.
  \end{equation}
  Then $f_\epsilon$ is fixed and $\supp(\dd f_\epsilon)$ is a fixed compact set.
  Hence, we can find $\kappa > 0$ such that
  \begin{equation}
    \label{eq:est-e}
    k^{\mF_\epsilon} \ge \kappa \text{ on }\widetilde{\pi}_\epsilon^{-1}\left(\supp(\dd f_\epsilon)\right).
  \end{equation}
  Therefore, using the notation in Section \ref{s2}, by (\ref{eq:est-kf}), (\ref{eq:def-e}) and (\ref{eq:est-e}), for any point $x\in\allowbreak \widetilde{\pi}_\epsilon^{-1}(\supp(\dd f_\epsilon))$, we have
  \begin{multline*}
    \bigg({1\over{2\beta^2}}\sum_{i,j=1}^{q}R^{\mE_{\epsilon,3m,R}}(f_i,f_j)c_{\beta,\gamma}(\beta^{-1}f_i)c_{\beta,\gamma}(\beta^{-1}f_j)s,s\bigg)(x) + \left(\frac{k^{\mF_\epsilon}}{4\beta^2}s,s\right)(x)\\
    \ge \left(\frac{k^{\mF_\epsilon}}{8\beta^2}s,s\right)(x) \ge \frac{\kappa}{8\beta^2}|s|^2(x),
  \end{multline*}
  which can be used to replace (\ref{2.25}) and (\ref{g2.24}).
  The remaining argument to get a contradiction follows from the same method used in Section~\ref{s2}.

  If $\dim M$ is odd, as in Section \ref{sec:odd}, we can replace $M_\epsilon$ by $M_\epsilon\times S^1(r)$. Consider the composition $ f_{\epsilon,r}$ of the maps
\begin{align*}
M_\epsilon \times S^1(r)\xrightarrow{ f_{\epsilon} \times {1\over r}{\rm id}}S^{\dim M}(1)\times S^1(1)\xrightarrow{{\wedge}} S^{\dim M+1}(1).
\end{align*}
Then this map $f_{\epsilon,r}$ is pointwise $(\max\{\epsilon k^{F_\epsilon},|{\dd}f_{\epsilon}|/r\},\Lambda^2)$-contracting with respect to the metric $g^{TM_\epsilon}\oplus {\dd \theta^2}$.

Fix $\epsilon$ as (\ref{eq:def-e}) and set
\begin{equation*}
  \kappa_0=\min\{k^{F_\epsilon}(x),x\in {\rm Supp}({ \dd f}_{\epsilon})\}.
\end{equation*}
We choose $r$ large enough such that 
 \begin{align*}
 \frac{\sup\left\{|{\dd}f_{\epsilon}|(x),x\in M_\epsilon\right\}}{r}<\epsilon \kappa_0.
 \end{align*}
 Then by combining the method used in the above even dimensional case and the content of Section~\ref{sec:odd}, we can also get a contradiction.

\vspace{\baselineskip}

\noindent{\bf Acknowledgments.}  G. Su and W. Zhang were partially supported by NSFC Grant No. 11931007 and Nankai Zhide Foundation. X. Wang was partially supported by NSFC Grant No. 12101361, the project of Young Scholars of SDU and the fundamental research funds of Shandong University, Grant No.\ 2020GN063. The authors would like to thank the anonymous referee for careful reading and valuable suggestions.

\end{document}